\newcommand{\arxiv}[1]{\href{http://arxiv.org/abs/#1}{\tt arXiv:\nolinkurl{#1}}}
\newtheorem{theorem}{Theorem}[section]
\newtheorem{lemma}[theorem]{Lemma}
\theoremstyle{definition}
\newtheorem{example}[theorem]{Example}
\theoremstyle{remark}
\newtheorem{remark}[theorem]{Remark}
\numberwithin{equation}{section}
\newcommand{\bfi}{{\bf i}}
\newcommand{\g}{\mathfrak{g}}
\renewcommand{\phi}{\varphi}
\DeclareMathOperator{\wt}{wt}
\renewcommand{\tilde}{\widetilde}
\title[Elementary construction of Lusztig's canonical basis]{Elementary construction of Lusztig's canonical basis}
\author[Peter Tingley]{Peter Tingley}
\address{Department of Mathematics and Statistics, Loyola University, Chicago, IL, USA.}
\email{ptingley@luc.edu}
\thanks{Partially supported by NSF grant  DMS-1265555.}
\subjclass[2010]{17B37} 
\keywords{canonical basis, crystal, PBW basis}
\begin{document}

\begin{abstract}
In this largely expository article
we present an elementary construction of Lusztig's canonical basis in type ADE. The method, which is essentially Lusztig's original approach, is to use the braid group to reduce to rank two calculations. 
Some of the wonderful properties of the canonical basis are already visible: that it descends to a basis for every highest weight integrable representation, and that it is a crystal basis. 
\end{abstract}

\maketitle

\section{Introduction}
Fix a simple Lie algebra $\g$ over ${\Bbb C}$ and let $U^-_q(\g)$ be the lower triangular part of the corresponding quantized universal enveloping algebra. Lusztig's canonical basis $B$ is a basis for $U_q^-(\g)$, unique once the Chevalley generators are fixed, which has remarkable properties. Perhaps the three most important are:
\begin{enumerate}
\item \label{property:basis} For each finite dimensional irreducible representation $V_\lambda$, the non-zero elements in the image of $B$ in $V_\lambda = U^-_q(\g)/I_\lambda$ form a basis; equivalently, the intersection of $B$ with every ideal $I_\lambda$ is a basis for the ideal. 

\item \label{property:crystal} $B$ is a crystal basis in the sense of Kashiwara.

\item \label{property:positive} In symmetric type, the structure constants of $B$ with respect to multiplication are Laurent polynomials in $q$ with positive coefficients. 
\end{enumerate}
Much has been made of \eqref{property:positive}, and it helped give birth to a whole new field: categorification. While this is a wonderful fact, the association of canonical bases with categorification has, I believe, obscured the fact that Lusztig's original construction is quite elementary. Using only basic properties of the braid group action on $U_q(\g)$ and rank 2 calculations, one can establish the existence and uniqueness of a canonical basis, and show that it satisfies both \eqref{property:basis} and \eqref{property:crystal}. Property \eqref{property:positive} is mysterious with this approach, but perhaps that is to be expected, since it does not always hold is non-symmetric types (see \cite{Tsuchioka}), and the arguments here essentially work in all finite types. 

We present Lusztig's elementary construction, but with a few changes. Most notably, we have disentangled the construction from the quiver geometry Lusztig was studying at the same time. This has required modifying some arguments. In particular, our proof of Theorem \ref{thm:ut} differs from that presented by Lusztig. The results can be found in \cite{L90a,L90b, L90-2, L90c, L:1993}. 

Lusztig's canonical basis is the same as Kashiwara's global crystal basis \cite{Kash}, and Kashiwara's construction is also elementary, at least in the sense that it does not use categorification. However, Kashiwara's construction is quite different from that presented here, and considerably more difficult. It is based on a complicated induction known as the ``grand loop argument." 
Of course, Kashiwara's construction has a big advantage in that it works beyond finite type.

Leclerc \cite{Lec} has some related work, and in particular proves an analogue of our Theorem \ref{thm:ut} (see \cite[Lemma 37]{Lec}). Leclerc's argument is quite different from the one used here, but also avoids using quiver geometry. That work leads more naturally to the dual canonical basis.

This article is fairly self contained, the biggest exception being that we refer to Lusztig's book \cite{L:1993} for one elementary but long calculation in type $\mathfrak{sl}_3$. 
We restrict to the ADE case for simplicity. The construction is not much harder in other finite types, but requires some more notation. The rank two calculations are also considerably more difficult in types $B_2$ and $G_2$ (see \cite{BFZ}). Those cases can also be handled by using a folding argument to understand them in terms of types $A_{3}$ and $D_4$ respectively (see \cite{BZ,L11}). 

\subsection*{Acknowledgements}
We thank Steve Doty, George Lusztig, and Ben Salisbury for helpful comments. We also thank the anonymous referee for a very careful reading and for suggesting many improvements. 

\section{Notation}

Let $\g$ be a complex Lie algebra of type ADE, with a chosen Borel subalgebra $\mathfrak{b}$ and Cartan subalgebra $\mathfrak{h}$.  Let $U_q(\g)$ be its quantized universal enveloping algebra defined over ${\Bbb Q}(q)$ and let $E_i, F_i, K_i^{\pm 1}$ for $i \in I$ be the standard generators. Here $I$ indexes the nodes of the Dynkin diagram, so we can discuss elements being adjacent. Following \cite{Kash,Sai}, the defining relations are, for all $i \neq  j \in I$,
\begin{equation}
\begin{aligned}
& K_i K_i^{-1} =K_i^{-1} K_i= 1,  \  K_iK_j=K_jK_i, 
\, K_iE_iK_i^{-1}= q^2 E_i, \\ 
& K_iF_iK_i^{-1}= q^{-2} F_i, \
  E_i F_j - F_j E_i =0, \ \ E_i F_i - F_i E_i = \frac{K_i-K_i^{-1}}{q-q^{-1}}.  \\
& \text{If $i$ is adjacent to $j$: }  E_i^2 E_j + E_j E_i^2 = (q+q^{-1}) E_iE_j E_i, \ \  \\
& \mbox{} \hspace{3.2cm} F_i^2 F_j + F_j F_i^2 = (q+q^{-1}) F_iF_j F_i, \\
&  \hspace{3.2cm} K_i E_j K_i^{-1} =q^{-1} E_j, \ \ K_i F_j K_i^{-1} = q F_j. \\
& \text{Otherwise: } E_iE_j= E_j E_i,  \ F_iF_j= F_j F_i,  \ K_i E_j K_i^{-1} =E_j, \ K_i F_j K_i^{-1} = F_j. \\
\end{aligned}
\end{equation}
We use the standard triangular decomposition,
\begin{equation}
U_q(\g)= U_q^-(\g) \otimes U_q^0(\g) \otimes U_q^+(\g),
\end{equation}
where $U_q^-(\g)$ (resp. $U_q^0,$ or $U_q^{+}$)  is the subalgebra generated by the $F_i$ (resp. $K_i^{\pm 1}$ or $E_i$). We also use the triangular decomposition with the order of the factors reversed. 
Bar involution is the ${\Bbb Q}$-algebra involution defined on generators by
\begin{equation}
\bar{E}_i=E_i, \quad \bar F_i=F_i, \quad \bar K_i= K_i^{-1}, \quad \bar q= q^{-1}.
\end{equation}
 Let $\{ \alpha_i \}$ be the set of simple roots for $\g$. For a positive root $\beta$, define its height $\text{ht}(\beta)$ to be the sum of the coefficients when $\beta$ is written as a linear combinations of simple roots. 
 Let $(\cdot, \cdot )$ be the standard bilinear form on root space $\mathfrak{h}^*$.

\section{Braid group action and PBW bases}
The following can be found in \cite{L:1993}. Lusztig actually defines PBW bases for $U^+(\mathfrak{g})$, and uses a slightly different action of the braid group, but this causes no significant differences. For each $i \in I$ there is an algebra automorphism $T_i$ of $U_q(\g)$ (denoted $T''_{i,1}$ in \cite{L:1993}) given by
\begin{equation}
T_i(F_j) :=
\begin{cases}
F_j  \qquad \qquad \quad \;\; \hspace{0.05cm} i \text{ not adjacent to } j \\
F_j F_i - q F_i F_j \quad i \text{ adjacent to } j \\
-K_j^{-1} E_j \qquad  \;\;\;   i=j,
\end{cases}
\end{equation}
\begin{equation}
T_i(E_j) :=
\begin{cases}
E_j    \qquad \qquad \qquad \;\;\;  \ i \text{ not adjacent to } j \\
E_i E_j - q^{-1} E_j E_i \quad i \text{ adjacent to } j \\
-F_j K_j \qquad \qquad  \ \;\;\;  i=j, 
\end{cases}
\end{equation}
\begin{equation}
T_i(K_j) :=
\begin{cases}
K_j  \qquad \; i \text{ not adjacent to } j \\
K_i K_j  \quad i \text{ adjacent to } j \\
K_j^{-1} \quad \;\; i=j.
\end{cases}
\end{equation}
One can directly check that these respect the defining relations of $U_q(\g)$, and that they satisfy the braid relations 
(i.e. $T_i T_j T_i = T_j T_i T_j$ for $i$ and $j$ adjacent, and $T_i T_j=T_jT_i$ otherwise). 
Each $T_i$ performs the Weyl group reflection $s_i$ on weights, where $U_q(\g)$ is graded by $\wt(E_i)=-\wt(F_i)= \alpha_i$, $\wt(K_i)=0$. 

Fix a reduced expression $w_0 = s_{i_1} \cdots s_{i_N}$ for the longest element of the Weyl group. Let ${\bf i}$ denote the sequence $i_1, i_2, \ldots, i_N$. Define ``root vectors"
\begin{equation}
\begin{aligned} 
F_{{\bf i} ; \beta_1} & := F_{i_1} \\
F_{{\bf i} ; \beta_2} & := T_{i_1}F_{i_2}  \\
F_{{\bf i} ;  \beta_3} & := T_{i_1}T_{i_2} F_{i_3}    \\
& \;\;\; \vdots \;\;\; .
\end{aligned}
\end{equation}
The notation $\beta_k$ in the subscripts is because, for all $k$,  
\begin{equation} wt(F_{{\bf i}, \beta_k}) = -s_{i_1} \cdots s_{i_k-1} \alpha_{i_k}. \end{equation}
These are exactly the negative roots, and we index the root vectors by the corresponding positive roots $\beta_k$.
When it does not cause confusion we leave off the subscript ${\bf i}$. 

\begin{example}
If $\mathfrak{g}=\mathfrak{sl}_3$ and ${\bf i}$ corresponds to the reduced expression $s_1s_2s_1$ then $(\beta_1,\beta_2,\beta_3)= (\alpha_1, \alpha_1+\alpha_2, \alpha_2)$ and $(F_{\beta_1}, F_{\beta_2}, F_{\beta_3})=(F_1, F_2F_1-qF_1F_2, F_2)$. 
\end{example}

Let 
\begin{equation}
B_{\bf i} := \{ F_{{\bf i} ;  \beta_1}^{(a_1)} F_{{\bf i} ;  \beta_2}^{(a_2)} \cdots F_{{\bf i} ;  \beta_N}^{(a_N)} : a_1, \ldots, a_N \in \Bbb Z_{\geq 0} \}.
\end{equation}
Here $X^{(a)}$ is the $q$-divided power $X^a/([a][a-1] \cdots [2])$, and $[n]=q^{n-1} + q^{n-3} + \cdots + q^{-n+1}$. We call the collection of exponents ${\bf a}= (a_1, \ldots, a_N)$ for an element of $B_{\bf i}$ its Lusztig data, and denote the element by $F^{\bf a}_{\bf i}$.

\begin{remark}
One can define $B_{\bf i}$ for any reduced word, not just reduced expressions of $w_0$, and many of the results in this article still hold. In particular, this can be done outside of finite type, where there is no longest element. 
\end{remark}

\begin{lemma} \label{lem:bp} Fix a reduced expression ${\bf i}$. 
\begin{enumerate}

\item \label{bp1} If $i_k, i_{k+1}$ are not adjacent, then reversing their order gives another reduced expression ${\bf i'}$, and the root vectors are unchanged (although they are reordered, since $\beta'_k= \beta_{k+1}$, and  $\beta'_{k+1}= \beta_{k}$).  

\item \label{bp2}  If $i_k=i_{k+2}$ and is adjacent to $i_{k+1}$, then $\beta_k+\beta_{k+2}=\beta_{k+1}$ and 
$$F_{\beta_{k+1}}=  F_{\beta_{k+2}} F_{\beta_k} - q F_{\beta_k} F_{\beta_{k+2}}.$$ 
Furthermore, for the new reduced expression ${\bf i'}$ where $i_k i_{k+1} i_k$ is replaced with $i_{k+1} i_k i_{k+1}$, $F_{{\bf i'}, \beta} = F_{{\bf i}, \beta}$ for all $\beta \neq \beta_{k+1}$.  

\item \label{bp3} If $\beta_k=\alpha_i$ for some $k,i$, then $F_{{\bf i}; \beta_k}=F_i$.
In particular, $F_{\beta_N} = F_{{\sigma (i_N)}}$, where $\sigma$ is the Dynkin diagram automorphism given by $\alpha_{\sigma(i)}= -w_0 \alpha_i$. 

\end{enumerate}
\end{lemma}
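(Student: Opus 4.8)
The plan is to handle (i) and (ii) by direct computation with the automorphisms $T_i$, reducing (ii) to a single rank two identity, and then to deduce (iii) formally from (i) and (ii) together with Matsumoto's theorem. For (i), note that if $i_k$ and $i_{k+1}$ are not adjacent then $s_{i_k}s_{i_{k+1}}=s_{i_{k+1}}s_{i_k}$, so ${\bf i'}$ is again a reduced expression for $w_0$, and I would compare the two lists of root vectors position by position. For $j<k$ the defining prefix $T_{i_1}\cdots T_{i_{j-1}}$ is literally unchanged, and for $j>k+1$ it changes only by swapping the commuting pair $T_{i_k}T_{i_{k+1}}=T_{i_{k+1}}T_{i_k}$. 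At the two swapped positions one uses the complementary facts $T_{i_k}F_{i_{k+1}}=F_{i_{k+1}}$ and $T_{i_{k+1}}F_{i_k}=F_{i_k}$ for non-adjacent indices, which give exactly $\beta'_k=\beta_{k+1}$, $\beta'_{k+1}=\beta_k$ with the root vectors carried along. Thus (i) is immediate from the two defining properties of $T_i$ on non-adjacent generators.

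For (ii), write $a=i_k=i_{k+2}$, $b=i_{k+1}$ (adjacent), $w=s_{i_1}\cdots s_{i_{k-1}}$ and $T=T_{i_1}\cdots T_{i_{k-1}}$. A short computation using $s_a\alpha_b=\alpha_a+\alpha_b$ gives $\beta_k=w\alpha_a$, $\beta_{k+1}=w(\alpha_a+\alpha_b)$ and $\beta_{k+2}=w\alpha_b$, whence $\beta_{k+1}=\beta_k+\beta_{k+2}$. Since $T$ is an algebra homomorphism, both the asserted commutator relation and the invariance $F_{{\bf i'},\beta}=F_{{\bf i},\beta}$ for $\beta\neq\beta_{k+1}$ under the move $aba\mapsto bab$ reduce, after applying $T^{-1}$, to identities in the rank two subalgebra generated by $F_a,F_b$. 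The one fact I would import here is the $\mathfrak{sl}_3$ calculation $T_aT_bF_a=F_b$ and symmetrically $T_bT_aF_b=F_a$, the ``elementary but long'' computation cited from \cite{L:1993}. Granting it, $F_{{\bf i'};\beta'_k}=TF_b=TT_aT_bF_a=F_{{\bf i};\beta_{k+2}}$ and $F_{{\bf i'};\beta'_{k+2}}=TF_a=F_{{\bf i};\beta_k}$ match the swapped end positions, while the middle vectors $TT_aF_b$ versus $TT_bF_a$ are the ones that genuinely differ; the braid relation $T_aT_bT_a=T_bT_aT_b$ makes the prefixes agree again past position $k+2$, so no other root vector moves.

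For (iii) I would show that the pair (the positive root $\alpha_i$, its attached vector $F_{\beta_k}$) is invariant under every braid move, and then slide $\alpha_i$ to the front. The crucial observation is that in a length three move the middle root equals $\beta_p+\beta_{p+2}$ by (ii), so it has height at least two and is never simple; hence a simple root $\alpha_i$ can only occupy an end position of a length three move, where (ii) preserves its vector, or take part in a commutation, where (i) preserves its vector. By Matsumoto's theorem any two reduced words for $w_0$ are connected by moves of types (i) and (ii), which in type ADE are the only braid moves, so the vector attached to the specific root $\alpha_i$ is the same in every reduced expression. Since each simple reflection is a left descent of $w_0$, there is a reduced expression ${\bf i'}$ with $i'_1=i$, for which $F_{{\bf i'};\beta'_1}=F_i$ by definition; comparing yields $F_{{\bf i};\beta_k}=F_i$. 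The ``in particular'' statement is the case $k=N$, where $\beta_N=w_0s_{i_N}\alpha_{i_N}=-w_0\alpha_{i_N}=\alpha_{\sigma(i_N)}$ is simple.

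I expect the main obstacle to be (iii), or rather resisting the temptation to prove it by induction on $\ell(w)$. That direct approach stalls precisely when every descent of $w$ is adjacent to $i$, already visible for $w=s_1s_2$ in $\mathfrak{sl}_3$, where one is forced back onto the rank two identity itself. The clean route is the braid-move invariance above, whose entire content is the one-line remark that a simple root cannot be the middle term of a length three move; once that is in hand, (iii) is a purely formal consequence of (i), (ii), and Matsumoto.
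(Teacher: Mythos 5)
Your proposal is correct and takes essentially the same route as the paper: parts (i) and (ii) are handled by pulling back along the prefix $T_{i_1}\cdots T_{i_{k-1}}$ to reduce everything to rank-two identities (the key one being $T_aT_bF_a=F_b$ for adjacent $a,b$, which makes the outer root vectors of a three-term move match up while only the middle one changes), and (iii) follows formally because a simple root, never being a sum of two positive roots, can never be the middle root of a three-term braid move, with Matsumoto's theorem — which the paper's two-line proof leaves implicit but invokes explicitly just afterward in Lemma \ref{lem:inm} — connecting ${\bf i}$ to a reduced expression beginning with $i$. One minor misattribution: the identity $T_aT_bF_a=F_b$ is a short direct check from the formulas defining $T_i$, not the ``elementary but long'' $\mathfrak{sl}_3$ calculation the paper defers to \cite{L:1993}; that phrase refers to the comparison of $\mathbb{Z}[q]$-spans of PBW monomials used in the proof of Theorem \ref{th51}.
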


\begin{proof}
Part \eqref{bp1} and \eqref{bp2} follow by applying $T_{i_{k-1}}^{-1} \cdots T_{i_1}^{-1}$ and then doing a rank two calculation. Part \eqref{bp3} is an immediate consequence of \eqref{bp2}, since $\alpha_i$ is not the sum of any two positive roots, and if $i_1=i$ then $F_{\alpha_i}=F_i$ by definition. 
\end{proof}

\begin{lemma} \label{lem:inm}
Each root vector $F_{{\bf i}; \beta_k}$ is in $U_q^-(\g)$.
\end{lemma}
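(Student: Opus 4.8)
The plan is to induct on the height of the positive root $\beta_k$, peeling off one unit of height at a time with the $q$-commutator relation of Lemma~\ref{lem:bp}\,\eqref{bp2} and using Lemma~\ref{lem:bp}\,\eqref{bp3} for the base case. Since $F_{{\bf i};\beta_k}=T_{i_1}\cdots T_{i_{k-1}}F_{i_k}$ depends only on the initial segment $i_1,\dots,i_k$, I would prove, by induction on $\mathrm{ht}(\beta)$ and simultaneously over all reduced expressions, that every root vector whose root has height $h$ lies in $U_q^-(\g)$. When $h=1$ the root $\beta_k$ is simple, so $F_{{\bf i};\beta_k}=F_i$ by Lemma~\ref{lem:bp}\,\eqref{bp3}, a generator of $U_q^-(\g)$.

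For the inductive step, suppose $\mathrm{ht}(\beta_k)\geq 2$. The idea is to realize $\beta_k$ as the ``apex'' of a length-three braid pattern $a\,b\,a$ with $a$ adjacent to $b$. Granting this, Lemma~\ref{lem:bp}\,\eqref{bp2} gives
\[
F_{\beta_k}=F_{\gamma}F_{\delta}-qF_{\delta}F_{\gamma},\qquad \gamma+\delta=\beta_k,
\]
where $\gamma$ and $\delta$ are positive roots of strictly smaller height. By the inductive hypothesis $F_{\gamma},F_{\delta}\in U_q^-(\g)$, and since $U_q^-(\g)$ is a subalgebra the $q$-commutator on the right again lies in $U_q^-(\g)$.

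The main obstacle is the combinatorial claim buried in ``realize $\beta_k$ as an apex.'' A non-simple positive root need not sit at an apex in the given word, so I would pass to another reduced expression by braid moves. Lemma~\ref{lem:bp}\,\eqref{bp1} and \eqref{bp2} show that such moves leave every root vector unchanged except the one at the apex of the move actually performed; hence I must exhibit a path, through the graph of reduced expressions (connected by braid moves, by Tits--Matsumoto), that brings $\beta_k$ to an apex without ever using $\beta_k$ itself as an apex along the way. Establishing that such a path exists --- a purely combinatorial statement about the wiring diagram of a reduced word --- is the delicate part of the argument.

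A cleaner-looking alternative, which I would keep in reserve, is to peel from the front instead: writing $F_{{\bf i};\beta_k}=T_{i_1}\bigl(F_{{\bf i}';\,\beta'_{k-1}}\bigr)$ for the tail word ${\bf i}'=(i_2,\dots,i_N)$, one reduces to showing that $T_{i_1}$ sends the weight space $U_q^-(\g)_{-\gamma}$ into $U_q^-(\g)$ whenever $\gamma$ and $s_{i_1}\gamma$ are both positive roots. This isolates the content as a single rank-two assertion about the automorphism $T_{i_1}$, in the spirit of the rest of the paper, trading the combinatorial difficulty above for a direct analysis of $T_{i_1}$ on root strings.
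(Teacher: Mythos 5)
Your skeleton is the same as the paper's proof --- induction on $\mathrm{ht}(\beta)$ simultaneously over all reduced expressions, base case from Lemma~\ref{lem:bp}~\eqref{bp3}, inductive step by realizing $\beta$ as the middle root (``apex'') of a three-term braid move and applying Lemma~\ref{lem:bp}~\eqref{bp2} --- but you have left the crux unproven, and you have also made it sound harder than it is. You do \emph{not} need a path through the reduced-expression graph that avoids ever having $\beta$ at an apex: it suffices that \emph{some} sequence of braid moves starting from ${\bf i}$ eventually performs a three-term move with $\beta$ in the middle, and then you simply truncate at the \emph{first} move that changes $F_{{\bf i};\beta}$. Up to that point every root vector other than the apices actually used is unchanged by Lemma~\ref{lem:bp}~\eqref{bp1},~\eqref{bp2}, so the original $F_{{\bf i};\beta}$ still equals the current root vector at $\beta$, which at that step is the $q$-commutator $F_\delta F_\gamma - qF_\gamma F_\delta$ with $\gamma+\delta=\beta$ of strictly smaller height; your all-expressions induction hypothesis then finishes. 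So your ``delicate combinatorial statement'' dissolves into a plain existence claim --- but that existence claim still needs an argument, and you give none.

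The paper closes exactly this gap with a short argument you are missing. Since $\beta$ is not simple and $(\beta,\beta)=2>0$, choose $i$ with $(\alpha_i,\beta)>0$, so in type ADE $(\alpha_i,\beta)=1$ and $\alpha_i+\beta$ is not a root. There are reduced expressions of $w_0$ beginning with $s_i$ and ending with $s_{\sigma(i)}$, in whose convex orders $\alpha_i$ occurs first, respectively last; by Matsumoto's theorem one can braid from ${\bf i}$ to either, and one of the two sequences must reverse the relative order of $\alpha_i$ and $\beta$. Now a two-term move swaps only orthogonal roots, and the two outer roots of a three-term move sum to the middle root; since $\alpha_i+\beta$ is not a root and a simple root cannot be a middle root (it is not a sum of two positive roots), the only way $\alpha_i$ can pass $\beta$ is as an outer root of a three-term move with $\beta$ in the middle. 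Hence $F_{{\bf i};\beta}$ changes at or before that step, which is what the induction needs. Finally, your reserve route is also only a sketch, and you understate its cost: the claim that $T_{i_1}$ carries the relevant elements of $U_q^-(\g)$ back into $U_q^-(\g)$ is true but is not ``a single rank-two assertion'' --- it amounts to Lusztig's analysis of the subalgebra $U_q^-(\g)\cap T_i(U_q^-(\g))$, a result of comparable weight to the lemma you are trying to prove.
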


\begin{proof}
Proceed by induction on the height of $\beta=\beta_k$, the case of a simple root being immediate from Lemma \ref{lem:bp} \eqref{bp3}.
So assume $\beta$ is not simple. Fix $i$ so that $( \alpha_i, \beta ) > 0$.  There are reduced expressions $\bfi'$ and $\bfi''$ with $i'_1=i$ and $i''_N=\sigma(i)$, so $\beta'_1=\beta''_N=\alpha_i$. By Matsumoto's Theorem \cite{Matsumoto} one can move from $\bfi$ to either $\bfi'$ or $\bfi''$ by sequences of braid moves, and one of these sequences must move $\alpha_i$ past $\beta$. At that step $F_{{\bf i}; \beta}$ changes. The first time $F_{{\bf i}; \beta}$ changes Lemma \ref{lem:bp} \eqref{bp2} allows us to conclude by induction that $F_{{\bf i}; \beta} \in U_q^-(\g)$. 
\end{proof}

\begin{lemma} \label{lem:rest-of-triangular}
If $j \geq k$, then $T_{i_j}^{-1} \cdots T_{i_1}^{-1} F_{{\bf i}; \beta_k} \in U^{\geq 0}_q(\g)$. 
\end{lemma}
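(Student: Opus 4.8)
The plan is to first cancel the braid operators built into the root vector and then reduce everything to a statement about the upper triangular part. Since $F_{\bfi;\beta_k}=T_{i_1}\cdots T_{i_{k-1}}F_{i_k}$, the string $T_{i_j}^{-1}\cdots T_{i_1}^{-1}$ telescopes against $T_{i_1}\cdots T_{i_{k-1}}$, so that
\[
T_{i_j}^{-1}\cdots T_{i_1}^{-1}F_{\bfi;\beta_k}=T_{i_j}^{-1}\cdots T_{i_k}^{-1}(F_{i_k}).
\]
The crucial first move is the identity $T_{i_k}^{-1}(F_{i_k})=-E_{i_k}K_{i_k}$, which one verifies by inverting the defining formulas for $T_{i_k}$ (apply $T_{i_k}$ to $-E_{i_k}K_{i_k}$ and use $T_{i_k}(E_{i_k})=-F_{i_k}K_{i_k}$, $T_{i_k}(K_{i_k})=K_{i_k}^{-1}$). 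This is the step that turns a lower triangular generator into an upper triangular one, making membership in $U^{\geq 0}_q(\g)$ plausible at all.

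Writing $v=s_{i_{k+1}}\cdots s_{i_j}$, so that $T_{i_j}^{-1}\cdots T_{i_{k+1}}^{-1}=T_v^{-1}$, the telescoped expression becomes
\[
-\,\bigl(T_v^{-1}E_{i_k}\bigr)\bigl(T_v^{-1}K_{i_k}\bigr).
\]
The second factor lies in $U_q^0(\g)$, since each $T_l^{-1}$ sends a product of the $K_m^{\pm1}$ to another such product. It therefore suffices to prove $T_v^{-1}E_{i_k}\in U_q^{+}(\g)$, after which the whole expression lies in $U_q^{+}(\g)\,U_q^0(\g)=U^{\geq 0}_q(\g)$. For this I would invoke the standard structural property of Lusztig's braid symmetries \cite{L:1993}: for $w\in W$ and $i\in I$, one has $T_w^{-1}(E_i)\in U_q^{+}(\g)$ as soon as $\ell(s_iw)=\ell(w)+1$, equivalently $w^{-1}\alpha_i$ is a positive root. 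Here the hypothesis $j\geq k$ is exactly what is needed: $s_{i_k}v=s_{i_k}s_{i_{k+1}}\cdots s_{i_j}$ is a consecutive subword of the fixed reduced expression $w_0=s_{i_1}\cdots s_{i_N}$, hence is itself reduced, so $\ell(s_{i_k}v)=\ell(v)+1$ and the property applies with $w=v$ and $i=i_k$.

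The main obstacle is justifying this last structural fact, not the bookkeeping around it. The statement that $T_w^{-1}$ carries $E_i$ into $U_q^{+}(\g)$ genuinely concerns the algebra structure and cannot be read off from weights alone: an element of $U_q^{+}(\g)$ of positive weight can still be ejected from $U_q^{+}(\g)$ by a single $T_l^{-1}$, as already $T_l^{-1}(E_lE_m)$ shows. One must either cite it from \cite{L:1993} or reprove it by induction on $\ell(w)$, peeling the outermost factor $T_l^{-1}$ off a reduced word for $w$; the only delicate point in that induction is the step where the intermediate weight would equal $-\alpha_l$, and this is precisely excluded in our setting because every partial word $s_{i_k}s_{i_{k+1}}\cdots s_{i_m}$ with $k\leq m\leq j$ is again a reduced subword of $w_0$. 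I would also be careful about the direction of the inequality: it is $w^{-1}\alpha_i>0$, not $w\alpha_i>0$, that controls $T_w^{-1}(E_i)$, since $T_w^{-1}$ is not the same operator as $T_{w^{-1}}$, and conflating the two is an easy trap.
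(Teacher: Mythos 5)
Your proposal is correct and takes essentially the same route as the paper: telescope the $T^{-1}$'s against the definition of $F_{{\bf i};\beta_k}$, flip $F_{i_k}$ into the upper-triangular part via $T_{i_k}^{-1}(F_{i_k})=-E_{i_k}K_{i_k}$, and finish with the fact that the remaining string of $T^{-1}$'s keeps $E_{i_k}$ inside $U_q^+(\g)$ --- the paper packages this last input as the $E_i$/$T_i^{-1}$ analogue of Lemma \ref{lem:inm} applied to an auxiliary reduced expression for $w_0$, and your condition $\ell(s_{i_k}v)=\ell(v)+1$ is exactly that analogue applied to a reduced word beginning $i_j,\ldots,i_{k+1},i_k$ (the reversed consecutive factor, which is reduced for the reason you give). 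If anything your bookkeeping is sharper at two small points: with the paper's stated formulas for $T_i$ one indeed gets $-E_{i_k}K_{i_k}$ as you compute, rather than the $-K_{i_k}^{-1}E_{i_k}$ the paper displays (harmless, as both lie in $U_q^{\geq 0}(\g)$), and your explicit identification of which auxiliary reduced word makes the positivity fact apply is more precise than the paper's brief citation of the rotated expression $(i_{k+1},\ldots,i_N,\sigma(i_1),\ldots,\sigma(i_k))$.
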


\begin{proof}
\begin{equation}
T_{i_k}^{-1} \cdots T_{i_1}^{-1} F_{{\bf i}; \beta_k} = -K_{i_k}^{-1} E_{i_k},
\end{equation}
and ($i_{k+1} \cdots, i_N, \sigma(i_1), \cdots, \sigma(i_k))$ yields another reduced expression for $w_0$. The claim follows from Lemma \ref{lem:inm} (or more precisely an analogue with $F_i$ and $T_i$ replaced by $E_i$ and $T_i^{-1}$ respectively) since  the $T_i$ are algebra automorphisms and preserve $U^0_q(\g)$.
\end{proof}

\begin{theorem} \label{th:is-a-basis} For any ${\bf i}$, 
$B_{\bf i}$ is a ${\Bbb Q}(q)$-basis for $U_q^-(\g)$. 
\end{theorem}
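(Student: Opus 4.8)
The two assertions to establish are that $B_{\bfi}$ spans $U_q^-(\g)$ and that it is linearly independent, and my plan is to decouple them through a dimension count. Each $F_{\beta_k}$ lies in $U_q^-(\g)$ by Lemma~\ref{lem:inm}, so $\operatorname{span}_{\Q(q)}B_{\bfi}\subseteq U_q^-(\g)$ to begin with. By the weight computation of Section~3 the roots $\beta_1,\dots,\beta_N$ are precisely the positive roots, each occurring exactly once; hence for a fixed $\nu$ in the positive root lattice the number of Lusztig data $\mathbf a$ with $\sum_k a_k\beta_k=\nu$ is exactly the Kostant partition function $\cP(\nu)$. Since $\cP(\nu)=\dim_{\Q(q)}U_q^-(\g)_{-\nu}$ (the graded dimensions of $U_q^-(\g)$ agree with those of the classical $U(\fn^-)$, which carries a PBW basis), and each weight space is finite dimensional, it suffices to prove \emph{either} that $B_{\bfi}$ spans \emph{or} that it is linearly independent: the matching cardinalities then force the other.

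I would prove spanning. By Lemma~\ref{lem:bp}\,\eqref{bp3} every simple root occurs as some $\beta_k$, and the corresponding root vector is the generator $F_{i}$; hence $\operatorname{span}_{\Q(q)}B_{\bfi}$ contains every $F_j$, as well as $1$. As $U_q^-(\g)$ is generated by the $F_j$, it is therefore enough to show this span is a subalgebra. Expanding a product of two ordered monomials and repeatedly commuting factors past one another, closure under multiplication reduces to the straightening relations: for $l>k$ one must express $F_{\beta_l}F_{\beta_k}$ as a $\Q(q)$-linear combination of ordered monomials $F_{\beta_1}^{(c_1)}\cdots F_{\beta_N}^{(c_N)}$.

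These relations I would prove by induction on the word length $N$, for all reduced words simultaneously (as in the Remark; one never leaves $U_q^-(\g)$, so there is no need to name the relevant subalgebra). When $k>1$, apply the algebra automorphism $T_{i_1}^{-1}$: by definition $T_{i_1}^{-1}F_{\beta_m}=F_{\bfi';\beta'_{m-1}}$ for $m\geq 2$, where $\bfi'=(i_2,\dots,i_N)$ is a shorter reduced word, so $T_{i_1}^{-1}(F_{\beta_l}F_{\beta_k})=F_{\bfi';\beta'_{l-1}}F_{\bfi';\beta'_{k-1}}$ straightens by the inductive hypothesis; applying $T_{i_1}$ and using $T_{i_1}F_{\bfi';\beta'_m}=F_{\beta_{m+1}}$ returns an expression in ordered $\bfi$-monomials (necessarily with $a_1=0$). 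The remaining pairs, those involving $\beta_1$, I would handle by using the braid moves of Lemma~\ref{lem:bp}\,\eqref{bp1},\eqref{bp2} to bring the two roots into consecutive positions $\beta_m,\beta_{m+1}$ of a reduced word whose letters are adjacent in the Dynkin diagram, whereupon the relation becomes the rank-two relation of Lemma~\ref{lem:bp}\,\eqref{bp2}, or at worst the explicit $\sl_3$ identities quoted from \cite{L:1993}.

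The genuine obstacle is this last rank-two input together with the bookkeeping of the braid-move reductions: the passage from a general $\bfi$ to consecutive root vectors is conceptually clean, but one must check that reordering the word is compatible with the ordered-monomial rewriting, and the base case is exactly the long $\sl_3$ computation the article defers to \cite{L:1993}. I also note an alternative to the spanning step: linear independence can instead be extracted from Lemma~\ref{lem:rest-of-triangular}, which places $T_{i_j}^{-1}\cdots T_{i_1}^{-1}F_{\beta_k}$ in $U^{\geq 0}_q(\g)$ and thereby makes the monomials triangular for the triangular decomposition; combined with the same count this again yields the theorem.
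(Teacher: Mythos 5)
Your opening move---counting Lusztig data against Kostant's partition function so that spanning alone, or independence alone, suffices---is exactly how the paper begins. But the paper then proves \emph{linear independence}, by induction on the number of initial positions allowed to carry nonzero exponents, using the identity $T_{i_1}^{-1}F^{\bf a}_{\bfi} = (-K_{i_1}^{-1}E_{i_1})^{(a_1)}\, F^{\bf a'}_{\bfi'} \in U_q^{\geq 0}(\g)\cdot U_q^-(\g)$ (with $\bfi'=(i_2,\ldots,i_N,\sigma(i_1))$) together with the triangular decomposition; this is short, self-contained, and needs no rank-two input beyond the lemmas already proved. Your closing remark---that independence can be extracted from Lemma~\ref{lem:rest-of-triangular} via triangularity against the triangular decomposition---is essentially that argument, and had you developed it you would have reproduced the paper's proof. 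Your $k\geq 2$ reduction by stripping $T_{i_1}^{-1}$ is correct and is the same technique, applied to products rather than to independence.

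The spanning route you actually develop, however, has a genuine gap at its crux, the $k=1$ case. First, ``bring the two roots into consecutive positions'' is often impossible: if $\alpha_{i_1}+\beta_l$ is a root, then by convexity (Lemma~\ref{lem:convex}\eqref{con2} with $n=1$) it lies strictly between $\alpha_{i_1}$ and $\beta_l$ in \emph{every} order coming from a reduced word, so at best you can gather the rank-two block---your hedge ``at worst the explicit $\sl_3$ identities'' is carrying real weight there. Second, and more seriously: after straightening in an auxiliary word $\bfj$, you must convert the resulting $B_{\bfj}$-monomials back into $B_{\bfi}$-monomials, and this requires $\operatorname{span}_{\Q(q)}B_{\bfj}=\operatorname{span}_{\Q(q)}B_{\bfi}$ across braid moves---precisely the content of \eqref{eq:bms} in Theorem~\ref{th51}, whose proof is the long $\sl_3$ computation of \cite[Chapter 42]{L:1993}. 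You flag this compatibility issue but do not resolve it, and it is the entire difficulty; your termination bookkeeping for the rewriting (a well-founded measure on monomials of fixed weight) is also left unspecified. Note finally that once the braid-move span-invariance is granted, spanning follows far more cheaply than by straightening: $1\in B_{\bfi}$, and for any $i$, left multiplication by $F_i$ preserves $\operatorname{span}B_{\bfj}$ for a word $\bfj$ beginning with $i$ (it merely sends $F_i^{(a_1)}\cdots$ to $[a_1+1]\,F_i^{(a_1+1)}\cdots$), hence preserves the common span; since the $F_i$ generate $U_q^-(\g)$, the span is everything. So your main argument is simultaneously heavier than necessary and incomplete at the one step where the work lies, whereas the paper's independence argument avoids rank-two computations entirely.
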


\begin{proof}
The dimension of each weight space of $U_q^-(\g)$ is given by Kostant's partition function, so the size of the proposed basis is correct, and it suffices to show that these elements are linearly independent. Proceed by induction on $k$, showing that the set of such elements where $a_j=0$ for $j>k$ is linearly independent. The key is that 
\begin{equation}
T_{i_1}^{-1} F^{\bf a}= (- K_{i_1}^{-1} E_{i_1})^{(a_1)} \otimes F_{\bf i'}^{\bf a'} \in  U_q^{\geq 0} (\g) \otimes U_q^-(\g),
\end{equation}
where ${\bf i'}= (i_2, i_3, \ldots, i_N, \sigma(i_1))$ and ${\bf a'}= (a_2, a_3, \ldots, a_k, 0, \ldots, 0)$. 
The $F_{\bfi'}^{\bf a'}$ are linearly independent by induction, so the vectors $T_{i_1}^{-1} F_{\bf i}^{\bf a}$ are linearly independent by the triangular decomposition of $U_q(\g)$. The result follows since $T_{i_1}^{-1}$ is an algebra automorphism. 
\end{proof}

The following are referred to as convexity properties of PBW bases. 
\begin{lemma} \label{lem:convex} Fix ${\bf i}$ and $ 1 \leq j < k \leq N$. 
\begin{enumerate}

\item \label{con1} Write $F_{\beta_k} F_{\beta_j} = \sum_{\bf a} p_{\bf a} F^{\bf a}_{\bf i}$. If $p_{\bf a}\neq 0$ then the only factors that appear with non-zero exponent in $F_{\bf i}^{\bf a}$ are $F_{\beta_i}$ for $j \leq i \leq k$.

\item \label{con2} If $n \beta_\ell = a_j \beta_j+ \cdots + a_k \beta_k$ for $n, a_j,a_k>0$ and $a_{j+1}, \ldots, a_{k-1} \geq 0$, then $j < \ell < k$. 
\end{enumerate}
\end{lemma}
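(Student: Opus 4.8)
The plan is to prove (i) from the triangularity built into the braid operators and (ii) by elementary root combinatorics; the two parts are essentially independent. For (i), write $w_{(m)}=s_{i_1}\cdots s_{i_m}$ and recall from the proof of Theorem~\ref{th:is-a-basis} that applying a prefix of inverse braid operators moves the low-index factors of a PBW monomial into $U^{\geq 0}_q(\g)$ while turning the high-index factors into PBW root vectors for a rotated reduced word. To show $a_i=0$ for $i<j$ in $F_{\beta_k}F_{\beta_j}=\sum_{\bf a}p_{\bf a}F^{\bf a}_{\bf i}$, I would apply $\Theta=T_{i_{j-1}}^{-1}\cdots T_{i_1}^{-1}$. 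On the left, $\Theta F_{\beta_j}=F_{i_j}$ and $\Theta F_{\beta_k}=T_{i_j}\cdots T_{i_{k-1}}F_{i_k}$ are root vectors for the rotated word $(i_j,\dots,i_N,\sigma(i_1),\dots,\sigma(i_{j-1}))$, hence lie in $U^-_q(\g)$ by Lemma~\ref{lem:inm}, so $\Theta(F_{\beta_k}F_{\beta_j})\in U^-_q(\g)$. On the right, Lemma~\ref{lem:rest-of-triangular} puts each $\Theta F_{\beta_m}$ with $m\le j-1$ into $U^{\geq 0}_q(\g)$, while each $\Theta F_{\beta_m}$ with $m\ge j$ is a PBW root vector for the rotated word; thus $\Theta F^{\bf a}_{\bf i}$ sits in the reversed triangular decomposition $U^{\geq 0}_q(\g)\otimes U^-_q(\g)$ with $U^-$-part the PBW monomial indexed by $(a_j,\dots,a_N)$ and $U^{\geq 0}$-part a divided-power monomial in the $E$-type vectors built from $(a_1,\dots,a_{j-1})$. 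Comparing the two sides, using linear independence of the PBW monomials (Theorem~\ref{th:is-a-basis}) together with the $E$-analogue for the $U^{\geq 0}$-factors, forces $p_{\bf a}=0$ whenever $(a_1,\dots,a_{j-1})\neq 0$. The bound $a_i=0$ for $i>k$ is the mirror image: applying $\Theta'=T_{i_k}^{-1}\cdots T_{i_1}^{-1}$ instead carries both $F_{\beta_j}$ and $F_{\beta_k}$ into $U^{\geq 0}_q(\g)$, so the left side now lies in $U^{\geq 0}_q(\g)$, and the same comparison kills the top exponents.

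For (ii), I would argue in the root lattice, using the standard fact that $\{\beta_1,\dots,\beta_m\}$ is exactly the set of positive roots $\gamma$ with $w_{(m)}^{-1}\gamma<0$; equivalently $w_{(m)}^{-1}\beta_i<0$ iff $i\le m$. Applying $w_{(j-1)}^{-1}$ to $n\beta_\ell=\sum_{i=j}^k a_i\beta_i$ sends every $\beta_i$ with $i\ge j$ to a positive root, so (as $a_j>0$) the right side becomes a nonzero element of the positive root cone; hence $w_{(j-1)}^{-1}\beta_\ell>0$ and $\ell\ge j$. Symmetrically, $w_{(k)}^{-1}$ gives $\ell\le k$.

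The step I expect to be the crux is upgrading these to strict inequalities. To exclude $\ell=j$, I would rewrite the relation as $(n-a_j)\beta_j=\sum_{i>j}a_i\beta_i$ and apply $w_{(j)}^{-1}$, which sends $\beta_j$ to $-\alpha_{i_j}$ and each $\beta_i$ with $i>j$ to a positive root. The right side is then a nonnegative combination of positive roots equal to a scalar multiple of the single simple root $\alpha_{i_j}$; comparing the coefficients of the remaining simple roots forces every positive root occurring to equal $\alpha_{i_j}$. Since $a_k>0$, this yields $w_{(j)}^{-1}\beta_k=\alpha_{i_j}$, i.e. $\beta_k=w_{(j)}\alpha_{i_j}=-\beta_j<0$, a contradiction; excluding $\ell=k$ is the mirror image via $w_{(k-1)}^{-1}$, and together these give $j<\ell<k$. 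The only genuinely nonformal inputs are this last observation that a positive combination of positive roots can be a multiple of a simple root only in the degenerate way above, and, in (i), the care needed to guarantee that the divided-power monomials in each of the two tensor factors are separately linearly independent so that the comparison really forces the coefficients to vanish.
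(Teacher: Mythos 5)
Your proposal is correct and takes essentially the same approach as the paper: part (i) is exactly the paper's argument (apply $T_{i_{j-1}}^{-1}\cdots T_{i_1}^{-1}$ and $T_{i_k}^{-1}\cdots T_{i_1}^{-1}$, then invoke Lemmas \ref{lem:inm} and \ref{lem:rest-of-triangular} and the triangular decomposition), with you spelling out the linear-independence bookkeeping the paper leaves implicit, and part (ii) uses the same sign argument on prefix inverses to force $j\le\ell\le k$. The only minor deviation is in excluding the boundary cases: the paper first notes $n>a_j$ for weight (height) reasons and then re-runs the sign argument on $(n-a_j)\beta_j = a_{j+1}\beta_{j+1}+\cdots+a_k\beta_k$, while your case analysis on the sign of $n-a_j$ via the observation that a positive combination of positive roots can be a multiple of a simple root only degenerately is an equally valid variant.
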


\begin{proof}
Since the $T_i$ are algebra automorphisms, Lemmas \ref{lem:inm} and \ref{lem:rest-of-triangular} give 
\begin{equation}
T_{i_{j-1}}^{-1} \cdots T_{i_2}^{-1} T_{i_1}^{-1} (F_{\beta_k} F_{\beta_j}) \in U_q^-(\g)
 \text{ and } 
T_{i_{k}}^{-1} \cdots T_{i_2}^{-1} T_{i_1}^{-1} (F_{\beta_k} F_{\beta_j}) \in U_q^{\geq 0}(\g).
\end{equation}
A linear combination of PBW basis elements can only satisfy these conditions if, in all of them, the exponents of $F_{\beta_i}$ are $0$ unless $j \leq i \leq k$. 
This establishes \eqref{con1}. 

For \eqref{con2}, 
Notice that 
$ s_{i_1} \cdots s_{i_{j-1}} (a_j \beta_j+ \cdots + a_k \beta_k)$ is in the positive span of the simple roots, and $s_{i_1} \cdots s_{i_{k}} (a_j \beta_j+ \cdots + a_k \beta_k)$ is in the negative span.
This can only happen for $n\beta_\ell$ if $j \leq \ell \leq k$. 
If $\ell=j$, then for weight reasons $n>a_j$. But then 
$(n-a_j) \beta_\ell = 0 \beta_j+a_{j+1} \beta_{j+1}+  \cdots + a_k \beta_k$ leads to a contradiction as above. A similar argument rules out $\ell=k$. 
\end{proof}

\begin{lemma} \label{lem:compspan}
Assume ${\bf i}, {\bf i'}$ are related by a single braid move. Fix a root $\beta$ such that $F_{{\bf i}, \beta}= F_{{\bf i'}, \beta}$. Then, for any $n$,
$$\operatorname{span} \{F^{\bf a}_{\bf i} \in B_{\bf i} :  F^{\bf a}_{\bf i}  \neq F_\beta^{(n)} \} = \operatorname{span} \{F^{\bf a}_{\bf i'} \in B_{\bf i'} :  F^{\bf a}_{\bf i'} \neq F_\beta^{(n)} \}.$$
\end{lemma}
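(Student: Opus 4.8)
The plan is to reduce the statement to a one-dimensional weight space inside the rank two subalgebra attached to the braid move. There are two kinds of moves. If ${\bf i}, {\bf i'}$ differ by a commuting move at positions $k, k+1$ (with $i_k, i_{k+1}$ non-adjacent), then by Lemma \ref{lem:bp}\eqref{bp1} the root vectors are unchanged, and $F_{\beta_k}, F_{\beta_{k+1}}$ commute (applying $T_{i_{k-1}}^{-1}\cdots T_{i_1}^{-1}$ turns this into $F_{i_k}F_{i_{k+1}}=F_{i_{k+1}}F_{i_k}$). Hence $F^{\bf a}_{\bf i}=F^{\bf a'}_{\bf i'}$ whenever $\bf a'$ is $\bf a$ with the $k,k+1$ entries swapped, so $B_{\bf i}=B_{\bf i'}$ as sets and the claim is immediate. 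So assume from now on that the move is genuine, $i_k=i_{k+2}$ adjacent to $i_{k+1}$.

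First I would record the locality of the change of basis. By Lemma \ref{lem:bp}\eqref{bp2}, $F_{{\bf i'};\beta}=F_{{\bf i};\beta}$ for every $\beta\neq\beta_{k+1}$, and the root vectors outside positions $k,k+1,k+2$ occur in the same order in $\bf i$ and $\bf i'$. Thus every $F^{\bf a}_{\bf i}$ factors as $P_c\cdot L\cdot S_c$, where $P_c=\prod_{j<k}F_{\beta_j}^{(a_j)}$ and $S_c=\prod_{j>k+2}F_{\beta_j}^{(a_j)}$ depend only on the ``outside data'' $c=(a_j)_{j\notin\{k,k+1,k+2\}}$ and are common to both words, while $L=F_{\beta_k}^{(a_k)}F_{{\bf i};\beta_{k+1}}^{(a_{k+1})}F_{\beta_{k+2}}^{(a_{k+2})}$ lies in the subalgebra $A:=\langle F_{\beta_k},F_{\beta_{k+2}}\rangle$. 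Rewriting $L$ in the basis $\mathcal L_{\bf i'}=\{F_{\beta_{k+2}}^{(a)}F_{{\bf i'};\beta_{k+1}}^{(b)}F_{\beta_{k}}^{(c)}\}$ of $A$ expresses $F^{\bf a}_{\bf i}$ as a combination of the $F^{\bf b}_{\bf i'}$ sharing the same $c$. In other words the transition matrix is block diagonal, the blocks indexed by $c$, and within each block it is the rank two transition between $\mathcal L_{\bf i}$ and $\mathcal L_{\bf i'}$. (That both are bases of $A$ is the rank two case of Theorem \ref{th:is-a-basis}, via the automorphism $T_{i_{k-1}}^{-1}\cdots T_{i_1}^{-1}$ identifying $A$ with $U_q^-(\mathfrak{sl}_3)$.)

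Next I would locate $v=F_\beta^{(n)}$ in this structure. Since $\beta\neq\beta_{k+1}$, in both words $v$ sits in a single block $c_v$: if $\beta\in\{\beta_k,\beta_{k+2}\}$ then $c_v$ is the trivial outside data and $v$ is the pure local power $F_\beta^{(n)}\in\mathcal L_{\bf i}\cap\mathcal L_{\bf i'}$; if $\beta=\beta_m$ with $m\notin\{k,k+1,k+2\}$ then $v$ has local factor $L_0=1$ and $c_v$ records a pure power at position $m$. The key point is that the local representative $L_0$ of $v$ spans a one-dimensional weight space of $A$. Indeed $F_{\beta_k}^{(a)}F_{{\bf i};\beta_{k+1}}^{(b)}F_{\beta_{k+2}}^{(c)}$ has weight $-(a+b)\beta_k-(b+c)\beta_{k+2}$; as $\beta_k,\beta_{k+2}$ are linearly independent, the only such monomial of weight $-n\beta_k$ is $F_{\beta_k}^{(n)}$ (symmetrically for $\beta_{k+2}$), and the only one of weight $0$ is $1$; the same computation applies to $\mathcal L_{\bf i'}$. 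Writing $\mu_0$ for this weight, $\mathcal L_{\bf i}\setminus\{L_0\}$ and $\mathcal L_{\bf i'}\setminus\{L_0\}$ are therefore both bases of $\bigoplus_{\mu\neq\mu_0}A_\mu$, hence span the same subspace of $A$.

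Finally I would assemble the blocks. Writing $B_{\bf i}\setminus\{v\}$ as the disjoint union over $c$ of its block-$c$ part, its span is $\sum_c$ of the block spans. For $c\neq c_v$ the block span is $P_c\,A\,S_c$, identical for $\bf i$ and $\bf i'$; for $c=c_v$ it is $P_{c_v}\big(\bigoplus_{\mu\neq\mu_0}A_\mu\big)S_{c_v}$ by the previous paragraph, again identical for $\bf i$ and $\bf i'$. Summing yields the asserted equality. I expect the only real subtlety to be the second step---checking that the outer factors $P_c,S_c$ are genuinely unchanged, so that the transition is block diagonal---together with the observation that one-dimensionality of the relevant weight space is exactly what the hypothesis $\beta\neq\beta_{k+1}$ secures: the middle root $\beta_{k+1}$ has an $(n+1)$-dimensional weight space, and the statement indeed fails for it.
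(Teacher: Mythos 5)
Your proof is correct and follows essentially the same route as the paper's: the two key facts in both arguments are locality of the braid-move transition (only the exponents at the three affected positions change, so the change of basis is ``block diagonal'' in your language) and the weight computation showing that the only local monomial of weight $n\beta$ is the pure power $F_\beta^{(n)}$. The paper packages this as a monomial-by-monomial containment --- every element of $B_{\bf i}$ other than $F_\beta^{(n)}$ expands in $B_{\bf i'}$ without $F_\beta^{(n)}$ appearing --- while you prove equality of the spans directly via the one-dimensionality of the relevant local weight space; the mathematical content is the same.
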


\begin{proof}
For two term braid moves $B_{\bf i} = B_{\bf i'}$ and the result is trivial. So assume ${\bf i}, {\bf i'}$ are related by a three term braid move affecting positions $i,i+1,i+2$. $F_{\beta_{i+1}}$ changes with such a move, so $\beta \neq \beta_{i+1}$. If $\beta \neq \beta_{i}, \beta_{i+2}$, then the claim is also trivial. So, it suffices to consider the cases $\beta = \beta_{i}, \beta_{i+2}$, and by symmetry it is enough to consider $\beta = \beta_{i}$. We will check that any monomial in $B_{\bf i}$ that has a non-zero exponent of $F_\gamma$ for $\gamma \neq \beta$ is equal to a linear combination of monomials on $B_{\bf i'}$ that still all have a non-zero exponent for some root other than $\beta$. 

If $F^{\bf a}_{\bf i} \in B_{\bf i}$ has a non-zero exponent for some $j \neq i,i+1,i+2$, every monomial that appears in its $B_{\bf i'}$ expansion will have that same exponent. 
If a monomial is such that the only non-zero exponents are $a_i, a_{i+1}, a_{i+2}$, and one of $a_{i+1}, a_{i+2}$ is non-zero, then its weight does not equal $n \beta$, so $F_{\beta}^{(n)}$ cannot appear in its expansion in $B_{\bf i'}$. This exhausts the possibilities. 
\end{proof}

\section{Equality mod $q$ and piecewise linear bijections} \label{s:emq}

Fix a reduced expression ${\bf i}$ for $w_0$, and recall from Theorem \ref{th:is-a-basis} that $B_{\bf i}$ is a basis for $U_q^-(\mathfrak{g})$. Let
\begin{equation}
\mathcal{L}=  \operatorname{span}_{{\Bbb Z[q]}} B_{\bf i}.
\end{equation}
Part \eqref{eq:tpp1} of the following can be found in \cite[Proposition 41.1.4]{L:1993}, and \eqref{eq:tpp2} is part of  \cite[Proposition 42.1.5]{L:1993}.  For non-simply laced types see \cite{Sai}.

\begin{theorem} $\mbox{}$  \label{th51}
\begin{enumerate}
\item \label{eq:tpp1}
$\mathcal{L}$ is independent of ${\bf i}$. 
\item \label{eq:tpp2}
The basis $B_{\bf i} + q \mathcal{L}$ of $\mathcal{L}/q \mathcal{L}$ is independent of ${\bf  i}$.  
\end{enumerate}
\end{theorem}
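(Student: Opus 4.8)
The plan is to reduce both statements to a single three-term braid move and then to a rank-two computation in type $A_2$, where everything is an explicit finite check.

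First I would use Matsumoto's theorem \cite{Matsumoto}: any two reduced expressions for $w_0$ are connected by braid moves, so it is enough to compare $B_{\bf i}$ and $B_{\bf i'}$ when ${\bf i},{\bf i'}$ differ by one move. A two-term move only reorders the root vectors and leaves them individually unchanged (Lemma \ref{lem:bp}\eqref{bp1}), so $B_{\bf i}=B_{\bf i'}$ and both claims are immediate. So I would assume ${\bf i},{\bf i'}$ are related by a three-term move affecting positions $k,k+1,k+2$, with $i_k=i_{k+2}=a$ and $i_{k+1}=b$ adjacent. By Lemma \ref{lem:bp}\eqref{bp2} the only root vector that changes is the middle one $F_{\beta_{k+1}}$; at every other index the ${\bf i}$- and ${\bf i'}$-root vectors agree.

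Next I would localize to these three slots. Write $F^{\bf a}_{\bf i}=L\cdot P\cdot R$ with $L=\prod_{j<k}F_{\beta_j}^{(a_j)}$, $R=\prod_{j>k+2}F_{\beta_j}^{(a_j)}$, and window $P=F_{\beta_k}^{(a_k)}F_{\beta_{k+1}}^{(a_{k+1})}F_{\beta_{k+2}}^{(a_{k+2})}$. Applying $\phi=T_{i_{k-1}}^{-1}\cdots T_{i_1}^{-1}$ gives $\phi(F_{\beta_k})=F_a$, $\phi(F_{\beta_{k+1}})=T_aF_b=F_bF_a-qF_aF_b$, and $\phi(F_{\beta_{k+2}})=T_aT_bF_a=F_b$, so $\phi(P)$ is precisely the PBW monomial for the word $(a,b,a)$ inside the rank-two subalgebra $U_q^-(\sl_3)$ generated by $F_a,F_b$; likewise $\phi$ sends the windowed ${\bf i'}$-monomials to the PBW monomials for the word $(b,a,b)$. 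These are the two PBW bases of $U_q^-(\sl_3)$ (Theorem \ref{th:is-a-basis}), so I would expand $\phi(P)$ in the second family and apply $\phi^{-1}$: this exhibits $P$ as a combination of windowed ${\bf i'}$-monomials only, with coefficients equal to the $\sl_3$ transition coefficients. Since $L$ and $R$ are built from root vectors common to ${\bf i}$ and ${\bf i'}$ and occupy the same slots, reinserting them yields $F^{\bf a}_{\bf i}=\sum c\,F^{\bf a'}_{\bf i'}$, where ${\bf a'}$ equals ${\bf a}$ outside the window and the coefficients $c$ are the $\sl_3$ transition coefficients, the same for every $L,R$. Thus the change-of-basis matrix is block diagonal, each block a copy of the one $A_2$ transition matrix.

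It then remains to prove both statements for $\g=\sl_3$: that the two $A_2$ PBW bases span the same $\Z[q]$-lattice, and that they agree modulo $q$. Granting this, the block-diagonal matrix and its inverse have entries in $\Z[q]$, so the lattice is independent of the word, proving \eqref{eq:tpp1}; and that matrix reduces modulo $q$ to a permutation matrix, so $B_{\bf i}+q\mathcal{L}$ is independent of the word, proving \eqref{eq:tpp2}. The hard part is exactly this rank-two input: it is the one genuinely computational step and the only place a real calculation is needed, which is why I would import it from \cite{L:1993} rather than redo it. A point to watch in the middle paragraph is that $\phi$ is not an automorphism of $U_q^-(\g)$ and so cannot be applied to the lattice $\mathcal{L}$ directly; it is used only to transport the single identity among the three windowed factors, where injectivity of $\phi$ together with uniqueness of PBW expansions lets us read off the coefficients.
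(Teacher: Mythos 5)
Your proposal is correct and follows essentially the same route as the paper's proof: reduce via Matsumoto's theorem to a single braid move, dispose of two-term moves trivially, and apply $T_{i_{k-1}}^{-1}\cdots T_{i_1}^{-1}$ to reduce the three-term case to the explicit $\mathfrak{sl}_3$ computation imported from \cite[Chapter 42]{L:1993}. The paper states the localization step \eqref{eq:bms} without elaboration, so your block-diagonality discussion and the caution about $\phi$ not preserving $U_q^-(\g)$ simply make explicit what the paper leaves implicit.
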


\begin{proof}
Any two reduced expressions  are related by a sequence of braid moves, so it suffices to consider reduced expressions related by a single braid move. The case of a two-term braid move
 is trivial, so consider a three-term braid move involving $i_k=i, i_{k+1}=j, i_{k+2}=i$. It suffices to check that
\begin{equation} \label{eq:bms}
\operatorname{span}_{{\Bbb Z}[q]} \{ F_{{\bf i}; {\beta_k}}^{(a_k)} F_{{\bf i}; {\beta_{k+1}}}^{(a_{k+1})} F_{{\bf i}; {\beta_{k+2}}}^{(a_{k+2})} \}= 
\operatorname{span}_{{\Bbb Z}[q]} \{ F_{{\bf i'}; {\beta'_k}}^{(a_k)} F_{{\bf i'}; {\beta'_{k+1}}}^{(a_{k+1})} F_{{\bf i'}; {\beta'_{k+2}}}^{(a_{k+2})} \},
\end{equation}
and that these sets coincide modulo $q$. 
Applying $T_{i_{k-1}}^{-1} \cdots T_{i_{1}}^{-1}$ shows that this is equivalent to the statement in the $\mathfrak{sl}_3$ case. That is an explicit (although surprisingly difficult) calculation, which can be found in \cite[Chapter 42]{L:1993}.
\end{proof}

One often wants to understand how the Lusztig data changes when one applies a braid move. That is, given $F^{\bf a}_{\bf i} \in B_\bfi$, one would like to know which element of $B_{\bf i'}$ is equal to it mod $q$. This is described by Lusztig's piecewise linear bijections from \cite[Chapter 42]{L:1993}. For a two term braid move involving $i_k, i_{k+1}$, the exponents of all $F_\beta$ stay the same (although two of them change places, since the roots are reordered). For a three term braid move involving 
$i_k, i_{k+1}, i_{k+2}$, all the exponents stay the same except for $a_{k}, a_{k+1}, a_{k+2}$, and these change according to: 
\begin{equation}
\begin{aligned}
a'_k \;\; &= \max \{ a_{k+1}, a_{k+1}+ a_{k+2} - a_k \}, \\
a'_{k+1} &= \min \{ a_k, a_{k+2}\}, \\
a'_{k+2} &= \max \{ a_{k+1}, a_{k+1}+a_k - a_{k+2} \}.
\end{aligned}
\end{equation}

\section{Triangularity of bar involution and the canonical basis}

There are two natural lexicographical orders on Lusztig data: one where ${\bf a} < {\bf b}$ if $a_1 > b_1$ or $a_1=b_1$ and $(a_2, \ldots) < (b_2, \ldots)$, and the other where one starts by comparing $a_N$ and $b_N$.  
Consider the partial order $\prec$ where ${\bf a} \preceq {\bf b}$ if $\wt({\bf a})=\wt({\bf b})$ and ${\bf a}$ is less then ${\bf b}$ for both of these orders. It follows from Lemma \ref{lem:convex}\eqref{con2} that 
the minimal elements are those where $a_k \neq 0$ implies $\beta_k$ is a simple root. Data with a unique non-zero $a_k$ are maximal, and are in fact the unique maximal elements of weight $a_k \beta_k$. 

\begin{theorem} \label{thm:ut}
For every reduced expression ${\bf i}$ and every Lusztig data ${\bf a}$, 
$$\bar F_{\bf i}^{\bf a} = F_{\bf i}^{\bf a} + \sum_{{\bf a'}\prec {\bf a}} p^{\bf a}_{{\bf a'}}(q) F_{\bf i}^{{\bf a'}},$$
where the $p^{\bf a}_{{\bf a'}}(q)$ are Laurent polynomials in $q$. 
\end{theorem}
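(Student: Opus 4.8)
The plan is to induct on the height of $\wt({\bf a})$, exploiting that bar is a ${\Bbb Q}$-algebra homomorphism fixing each $F_i$ and sending $q \mapsto q^{-1}$. Since $[n]$ is symmetric in $q$, each divided power $F_i^{(m)}$ is bar-invariant; in particular, by Lemma \ref{lem:bp}\eqref{bp3} the end root vectors $F_{\beta_1}=F_{i_1}$ and $F_{\beta_N}=F_{\sigma(i_N)}$ are the $F$'s of simple roots, so $F_{\beta_1}^{(m)}$ and $F_{\beta_N}^{(m)}$ are bar-invariant. Moreover $\overline{F_{\bf i}^{\bf a}}$ has weight $\wt({\bf a})$, so its expansion in $B_{\bf i}$ involves only data of that weight; what remains is to locate the support, check the coefficients are Laurent polynomials, and check that the coefficient of $F_{\bf i}^{\bf a}$ itself is $1$.

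The clean case is $a_1>0$ (the case $a_N>0$ is symmetric, peeling at the right end). Factor $F_{\bf i}^{\bf a}=F_{\beta_1}^{(a_1)}F_{\bf i}^{{\bf a}''}$ with ${\bf a}''=(0,a_2,\ldots,a_N)$, so that $\overline{F_{\bf i}^{\bf a}}=F_{\beta_1}^{(a_1)}\,\overline{F_{\bf i}^{{\bf a}''}}$. Since $\wt({\bf a}'')=\wt({\bf a})-a_1\beta_1$ has smaller height, the inductive hypothesis gives $\overline{F_{\bf i}^{{\bf a}''}}=F_{\bf i}^{{\bf a}''}+\sum_{{\bf c}\prec{\bf a}''}p_{\bf c}\,F_{\bf i}^{\bf c}$ with $p_{\bf c}$ Laurent. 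Multiplying by $F_{\beta_1}^{(a_1)}=F_{i_1}^{(a_1)}$ collapses each term via the divided-power identity $F_{i_1}^{(a_1)}F_{i_1}^{(c_1)}=(\text{Gaussian binomial})\cdot F_{i_1}^{(a_1+c_1)}$, so $F_{\beta_1}^{(a_1)}F_{\bf i}^{\bf c}$ is a Laurent multiple of the single basis element $F_{\bf i}^{{\bf c}^+}$ with ${\bf c}^+=(a_1+c_1,c_2,\ldots,c_N)$. The leading term is $F_{\beta_1}^{(a_1)}F_{\bf i}^{{\bf a}''}=F_{\bf i}^{\bf a}$, with coefficient $1$. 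A short check in the two lexicographic orders defining $\prec$ shows ${\bf c}^+\prec{\bf a}$: raising the first coordinate can only make data smaller in the first order, while on coordinates $2,\ldots,N$ the pair $({\bf c}^+,{\bf a})$ agrees with $({\bf c},{\bf a}'')$, so ${\bf c}\prec{\bf a}''$ transfers to ${\bf c}^+\prec{\bf a}$. This settles the theorem whenever $a_1>0$ or $a_N>0$.

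The remaining, and genuinely harder, case is $a_1=a_N=0$, where the support of ${\bf a}$ lies strictly inside $\{1,\ldots,N\}$ and the weight-reducing factorization above is unavailable. Peeling off the extreme root vectors in the support of ${\bf a}$ and straightening the resulting products with the convexity of Lemma \ref{lem:convex} reduces this to a single interior root vector, i.e.\ to $\overline{F_\beta^{(n)}}$ for $\beta$ not simple. Here the structure recorded after Lemma \ref{lem:convex} is decisive: the pure datum is the unique $\prec$-maximal datum of weight $n\beta$, so every term of $\overline{F_\beta^{(n)}}$ other than $F_\beta^{(n)}$ is automatically $\prec$, and the entire content of the case is that the coefficient of $F_\beta^{(n)}$ equals $1$.

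Pinning that leading coefficient is where I expect the real obstacle, and where a rank-two input seems unavoidable. The difficulty is that bar does not commute with the braid symmetries used to build $F_\beta=T_{i_1}\cdots T_{i_{k-1}}F_{i_k}$; instead $\overline{T_i(u)}=S_i(\bar u)$ for the conjugated automorphism $S_i=\overline{(\,\cdot\,)}\circ T_i\circ\overline{(\,\cdot\,)}$, which differs from $T_i$ only by lower-order terms (on adjacent $F_j$ the two differ by $(q-q^{-1})F_iF_j$). Thus $\overline{F_\beta}=S_{i_1}\cdots S_{i_{k-1}}F_{i_k}$, and applying $T_{i_{k-1}}^{-1}\cdots T_{i_1}^{-1}$ turns $F_\beta$ into the simple generator $F_{i_k}$ and reduces the comparison of the $S$'s with the $T$'s to the $\mathfrak{sl}_2$ and $\mathfrak{sl}_3$ calculations, exactly as in the proof of Theorem \ref{th51}; there bar-invariance of $F_{i_k}^{(n)}$ forces the leading coefficient to be $1$. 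As a consistency check independent of the rank-two computation, specializing $q=1$ turns bar into the identity and the PBW basis into the classical one, so the leading coefficient is $1$ at $q=1$; together with the involutivity relation $c\bar c=1$ this already forces $c=\pm q^m$, and the rank-two input is what removes the remaining monomial ambiguity.
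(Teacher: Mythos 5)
Your peeling argument for $a_1>0$ or $a_N>0$ (including the bookkeeping showing ${\bf c}^+\prec{\bf a}$ in both lexicographic orders), and the reduction of the interior case to a pure datum $F_\beta^{(n)}$, whose support-triangularity is automatic since $n\delta_j$ is the unique maximal datum of its weight, are all sound and run parallel to the paper's own reduction via Lemma \ref{lem:convex}. But the step you yourself flag as the obstacle is a genuine gap, not merely a deferred computation. Your proposed mechanism --- write $\overline{F_\beta}=S_{i_1}\cdots S_{i_{k-1}}F_{i_k}$ with $S_i=\overline{(\,\cdot\,)}\circ T_i\circ\overline{(\,\cdot\,)}$ and ``reduce the comparison of the $S$'s with the $T$'s to rank two, exactly as in the proof of Theorem \ref{th51}'' --- does not work as stated. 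Theorem \ref{th51} concerns the $\Z[q]$-lattice $\mathcal{L}$ and the basis modulo $q$; it says nothing about bar, and you do not yet know that $\overline{F_\beta^{(n)}}$ lies in $\mathcal{L}$ (that is part of what is being proved). Moreover, after applying $T_{i_{k-1}}^{-1}\cdots T_{i_1}^{-1}$ you leave $U_q^-(\g)$, so ``leading coefficient'' has no meaning there without triangular-decomposition bookkeeping; and the error terms by which $S_i$ differs from $T_i$, such as $(q-q^{-1})F_iF_j$, are created at every letter of the word and then pushed through all the remaining $T$'s, so the discrepancy is not confined to a single rank-two subalgebra and the claimed localization fails. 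Your soft constraints really do leave the ambiguity you admit: $p\bar p=1$ together with the $q=1$ specialization gives only $p=q^m$, and nothing in your argument removes $m$.

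For comparison, the paper closes exactly this step without any heavy rank-two computation (the Chapter 42 calculation of \cite{L:1993} enters only Theorem \ref{th51}). It inducts on $\text{ht}(\beta)$: perform braid moves until the first one that changes $F_\beta$ (possible as in the proof of Lemma \ref{lem:inm}); Lemma \ref{lem:compspan} --- the control device missing from your sketch --- guarantees that under the moves not affecting $F_\beta$, terms $\prec F_\beta$ are sent to combinations of terms still $\prec F_\beta$, so the coefficient $p(q)$ is well defined and unchanged as the PBW basis changes. At the critical move one has $F_{\beta_j}=F_{\beta_{j+1}}F_{\beta_{j-1}}-qF_{\beta_{j-1}}F_{\beta_{j+1}}$ with both factors of smaller height, and expanding $\bar F_{\beta_j}-F_{\beta_j}$, the height induction together with Lemma \ref{lem:convex}\eqref{con1} shows every PBW monomial that appears has a left factor $F_{\beta_\ell}$ with $\ell<j$ or a right factor $F_{\beta_m}$ with $m>j$, hence is $\prec F_{\beta_j}$; the divided-power case follows by expanding $(F_{\beta_j}+(\bar F_{\beta_j}-F_{\beta_j}))^{(n)}$ and applying Lemma \ref{lem:convex}\eqref{con1} repeatedly. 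So if you want to rescue your route, the missing ingredient is precisely an analogue of Lemma \ref{lem:compspan} making the coefficient of $F_\beta$ invariant across braid moves; with it, your conjugation picture collapses to the paper's local computation, which needs only the elementary identity of Lemma \ref{lem:bp}\eqref{bp2} --- the heavy rank-two input you expected to be unavoidable is in fact not needed for this theorem.
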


\begin{proof}
That the coefficients are Laurent polynomials follows from the form of bar and the braid group operators. 
The point is the unit triangularity.

If the claim is true for all $F^{(a_j)}_{\beta_j}$, then $\bar F^{\bf a}_{\bf i}$ would be equal to $F_{\bf i}^{\bf a}$ plus terms obtained by replacing some of the $F_{{\bf i};\beta}$ with lesser monomials. Lemma \ref{lem:convex} implies that, once this is rearranged, all terms that appear are $\prec F_{\bf i}^{\bf a}$. Hence the minimal counter-example would have to be of the form $F_{\beta}^{(n)} = F_{{\bf i}; \beta_j}^{(n)}$ for some ${\bf i}$, $j$ and $n$. 

Proceed by induction on the height $\text{ht}(\beta)$. By Lemma \ref{lem:bp}, $F_{\alpha_i}^{(n)}= F_{i}^{(n)}$ satisfies the condition (it is in fact bar-invariant), so assume $\text{ht}(\beta)>1$. 
Certainly 
\begin{equation} \label{eq:bfbj}
\bar F_{\beta}^{(n)} = p(q) F_{\beta}^{(n)} + \sum_{{\bf a'}\prec {\bf a}} p^{\bf a}_{{\bf a'}}(q) F^{{\bf a'}},
\end{equation}
since $F_{\beta}^{(n)}$ is the unique maximal element of its weight. It remains to show that $p(q)=1.$ 

First consider just $F_\beta$ (and please refer to Example \ref{ex:illa}).  
Do braid moves until $F_{\beta}$ changes (this is possible as discussed in the proof of Lemma \ref{lem:inm}). 
For the braid moves where $F_{\beta}$ does not change, by Lemma \ref{lem:compspan}, terms $\prec F_{\beta}$ get sent to linear combinations of terms that are still $\prec F_{\beta}$, so $p(q)$ does not change. Thus we may assume that a single braid move would change $F_{\beta}$. Then by Lemma \ref{lem:bp},
$F_{\beta_j}= F_{\beta_{j+1} } F_{\beta_{j-1}} - q F_{\beta_{j-1}} F_{\beta_{j+1}}$, so
\begin{equation} \label{eq:gat}
\begin{aligned}
& \bar F_{\beta_j} - F_{\beta_j} =  (\bar F_{\beta_{j+1} }- F_{\beta_{j+1} }) F_{\beta_{j-1}}  + \bar  F_{\beta_{j+1} }( \bar F_{\beta_{j-1} }- F_{\beta_{j-1} })  + \\
& \hspace{2.3cm} +q F_{\beta_{j-1}} F_{\beta_{j+1}}-q^{-1} \bar F_{\beta_{j-1}} \bar F_{\beta_{j+1}}.
\end{aligned}
\end{equation}
By induction the statement holds for $ F_{\beta_{j+1}}$, so
$\bar F_{\beta_{j+1} }- F_{\beta_{j+1} }$ is a sum of PBW monomials of weight $\beta_{j+1}$, all $\prec  F_{\beta_{j+1} }$. In particular, each has a left factor $F_{\beta_\ell}$ for some $\ell< j+1$, and for weight reasons we actually must have $\ell<j$. By Lemma \ref{lem:convex}\eqref{con1}, every term in the PBW expansion of $ (\bar F_{\beta_{j+1} }- F_{\beta_{j+1} }) F_{\beta_{j-1}} $ has a left factor  $F_{\beta_\ell}$ for $\ell <j$. Similar arguments show that every term in the PBW expansion of the remaining parts has either a left factor $F_{\beta_\ell}$ for $\ell <j$ or a right factor $F_{\beta_m}$ for $m >j$. Since $F_{\beta_j}$is the unique maximal PBW monomial of weight $\beta_j$ the statement holds.

Now consider $F_{\beta_j}^{(n)}$. We know $\bar F_{\beta_j}-F_{\beta_j}$ is a sum of terms $\prec F_{\beta_j}$, so
\begin{equation}
\bar F_{\beta_j}^{(n)} -F_{\beta_j}^{(n)}  =(F_{\beta_j} + (\bar F_{\beta_j}-F_{\beta_j}))^{(n)} -F_{\beta_j}^{(n)}  \end{equation} is a linear combination of terms of the form
\begin{equation} \label{eq:anew}
F_{\beta_j}^k (\text{ a PBW monomial $ M \prec F_{\beta_j}$ } ) R,
\end{equation}
where the precise form of $R$ is irrelevant. Each $M$ has a left factor $F_{\beta_\ell}$ for $\ell < j$. Applying Lemma \ref{lem:convex}\eqref{con1} repeatedly, every term in the PBW expansion of $F_{\beta_j}^k MR$ also has a left factor $F_{\beta_{\ell'}}$ for some $\ell' <j$, so is $\prec F_{\beta_j}^{(n)}.$ 
\end{proof}

\begin{example} \label{ex:illa} 
Consider $\mathfrak{sl}_4$ and the reduced expression $w_0=s_3s_1s_2s_1s_3s_2$. The corresponding order on positive roots is
\begin{equation}
\beta_1=\alpha_3, \  \beta_2=\alpha_1, \  \beta_3=\alpha_1+\alpha_2+\alpha_3, \  \beta_4= \alpha_2+\alpha_3, \  \beta_5=\alpha_1+\alpha_2, \  \beta_6=\alpha_2.
\end{equation}
Applying braid moves until the relevant $F_{\beta_k}$ changes, and using Lemma \ref{lem:bp}, gives $F_{ \beta_4}= F_2F_3-qF_3F_2$, and
$F_{\beta_3} = F_{ \beta_4} F_1 - qF_1F_{\beta_4}.$ 
Then
\begin{equation} \bar F_{ \beta_4} - F_{\beta_4}= (q-q^{-1})F_3F_2, \end{equation}
which is certainly $\prec F_{\beta_4}.$ We also have 
\begin{equation} \bar F_{ \beta_3} - F_{\beta_3} = \bar F_{ \beta_4} F_1 -F_{ \beta_4} F_1+ q F_1 F_{ \beta_4} -q^{-1} F_1\bar F_{ \beta_4}.\end{equation}
This is simpler than \eqref{eq:gat} because $F_1$ is bar invariant. 
Inductively, the right side is
\begin{equation} (\text{terms $\prec F_{ \beta_4}$}) F_1 + F_1 (\text{ something }).\end{equation}
The terms $\prec F_{\beta_4}$ all have factors $F_{\beta_k}$ for $k <3$, a property which is preserved under right multiplication by Lemma \ref{lem:convex}, so all terms that appear when one rearranges are $\prec F_{\beta_3}$. Here the only term $\prec F_{\beta_4}$ is $F_3F_2$ so this can also be verified directly. 
\end{example}

\begin{minipage}{4.7in}
\begin{theorem} \label{th:cbe}
There is a unique basis $B$ of $U_q^-(\g)$ such that
\begin{enumerate}
\item $B$ is contained in $\mathcal{L}$, $B+q\mathcal{L}$ is a basis for $\mathcal{L}/q\mathcal{L}$, and this agrees with $B_{\bf i} + q\mathcal{L}$ for some (equivalently any by Theorem \ref{th51}) ${\bf i}$. 
\item $B$ is bar invariant. 
\end{enumerate}
Furthermore, the change of basis from any $B_{\bf i}$ to $B$ is unit-triangular. 
\end{theorem}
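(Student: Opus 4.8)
The plan is to recognize the statement as an instance of Lusztig's lemma and to deduce everything from two facts already in hand: the unit triangularity of the bar involution on each $B_{\bf i}$ (Theorem \ref{thm:ut}) and the independence of $\mathcal{L}$ and of $B_{\bf i}+q\mathcal{L}$ from ${\bf i}$ (Theorem \ref{th51}). Fix a weight $\nu$; since the $\nu$-weight space of $U_q^-(\g)$ is finite dimensional, the set $P_\nu=\{{\bf a}:\wt({\bf a})=\nu\}$ is a finite poset under $\prec$, and it suffices to construct $B$ weight space by weight space. Throughout, bar is the $\Q(q)$-semilinear involution with $\bar q=q^{-1}$, and I will use that its matrix in the basis $B_{\bf i}$ is unit upper triangular with respect to $\prec$ with entries in $\Z[q,q^{-1}]$, this integrality being inherited from the integral form underlying $\mathcal{L}$. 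Note that bar does \emph{not} preserve $\mathcal{L}$; the argument leans on triangularity rather than on any stability of the lattice.

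For existence I would seek, for each ${\bf a}\in P_\nu$, a bar-invariant element $b_{\bf a}=F^{\bf a}_{\bf i}+\sum_{{\bf a'}\prec{\bf a}}\pi_{\bf a'}(q)F^{\bf a'}_{\bf i}$ with every $\pi_{\bf a'}\in q\Z[q]$, solving for the $\pi_{\bf a'}$ by descending induction on $\prec$. Expanding $\bar b_{\bf a}=b_{\bf a}$ and reading off the coefficient of $F^{\bf a'}_{\bf i}$ yields at each step an equation $\pi_{\bf a'}-\bar\pi_{\bf a'}=f_{\bf a'}$, where $f_{\bf a'}\in\Z[q,q^{-1}]$ is already determined by the previously constructed $\pi_{\bf c}$ with ${\bf a'}\prec{\bf c}\preceq{\bf a}$. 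Using that bar squares to the identity (equivalently that its triangular matrix $R$ satisfies $R\bar R=I$), one checks $\bar f_{\bf a'}=-f_{\bf a'}$; for such an $f$ there is a unique $\pi\in q\Z[q]$ with $\pi-\bar\pi=f$, namely the positive-degree part of $f$. This gives existence and uniqueness of each $\pi_{\bf a'}$. By construction $b_{\bf a}\in\mathcal{L}$ and $b_{\bf a}\equiv F^{\bf a}_{\bf i}\bmod q\mathcal{L}$, so $B=\{b_{\bf a}\}$ is bar invariant, lies in $\mathcal{L}$, has $B+q\mathcal{L}=B_{\bf i}+q\mathcal{L}$ (which is ${\bf i}$-independent by Theorem \ref{th51}), and the change of basis from $B_{\bf i}$ to $B$ is unit triangular; that $B$ is a basis then follows since its transition matrix is unipotent.

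For uniqueness I would argue by the rigidity of bar-invariant lattice vectors. Suppose $B'=\{b'_{\bf a}\}$ is any basis satisfying the two conditions; using $B'+q\mathcal{L}=B_{\bf i}+q\mathcal{L}$, index $B'$ so that $b'_{\bf a}\equiv F^{\bf a}_{\bf i}\bmod q\mathcal{L}$. Then $x:=b'_{\bf a}-b_{\bf a}$ is bar invariant and lies in $q\mathcal{L}$. Writing $x=\sum_{\bf b}e_{\bf b}F^{\bf b}_{\bf i}$ with $e_{\bf b}\in q\Z[q]$ and looking at a $\prec$-maximal ${\bf b}$ with $e_{\bf b}\neq 0$, the triangularity of bar forces the coefficient of $F^{\bf b}_{\bf i}$ in $\bar x$ to equal $\bar e_{\bf b}$; hence $e_{\bf b}=\bar e_{\bf b}$, impossible for a nonzero element of $q\Z[q]$ since its bar lies in $q^{-1}\Z[q^{-1}]$. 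Thus $x=0$ and $B'=B$.

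The steps are all forced once the prior theorems are granted, so no deep obstacle remains; the one point demanding care is the self-consistency identity $\bar f_{\bf a'}=-f_{\bf a'}$ that makes the triangular solve well posed, together with keeping all coefficients in $\Z[q,q^{-1}]$ so that the $\Z[q]$-lattice conditions defining $\mathcal{L}/q\mathcal{L}$ are genuinely respected. Both are routine but essential, and it is exactly this integrality that one later exploits when passing to the quotients $V_\lambda$.
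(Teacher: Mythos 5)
Your proposal is correct and takes essentially the same approach as the paper: both are the standard ``Lusztig's lemma'' triangular construction, deducing existence and uniqueness from the unit triangularity of bar (Theorem \ref{thm:ut}) together with Theorem \ref{th51}, exactly the argument the paper attributes to \cite[\S 5.1]{Lec} and \cite[Lemma 0.27]{DDPW}. The only packaging difference is that the paper corrects $F^{\bf a}$ against the \emph{previously constructed} bar-invariant elements $b^{\bf a'}$, so involutivity directly forces each coefficient to satisfy $p^{\bf a}_{\bf a'}(q^{-1})=-p^{\bf a}_{\bf a'}(q)$ with no further check, whereas your coefficientwise solve of $\pi_{\bf a'}-\bar\pi_{\bf a'}=f_{\bf a'}$ in the PBW basis needs the consistency identity $\bar f_{\bf a'}=-f_{\bf a'}$ coming from $R\bar R=I$ (which you assert and which does hold, by a routine computation), and your explicit maximal-term rigidity argument for uniqueness is, if anything, more detailed than the paper's ``no choice in the induction.''
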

\end{minipage}

\begin{proof}
This proof can be found in \cite[\S 5.1]{Lec}  and \cite[Lemma 0.27]{DDPW} in slightly different settings.
Fix ${\bf i}$ and proceed by induction on the partial order $\prec$, proving that there is such a basis for 
$V_{\bf a}= \operatorname{span} \{F^{\bf a'} \}_{{\bf a'} \preceq {\bf a}} $.  The case when ${\bf a}$ is minimal holds since Theorem \ref{thm:ut} shows that $F^{\bf a}$ itself is bar-invariant. 

So, fix a non-minimal ${\bf a}$. By Theorem \ref{thm:ut},
\begin{equation}
\bar F^{\bf a}= F^{\bf a} + \sum_{{\bf a'} \prec {\bf a}} p_{\bf a'}^{\bf a}(q)  b^{\bf a'}
\end{equation}
for various Laurent polynomials $p_{\bf a'}^{\bf a}(q)$,
where the $b^{\bf a'}$ are the inductively found elements of $B$.
But $\bar{\bar F}^{\bf a}=F^{\bf a}$, which implies that
each $p_{\bf a'}^{\bf a}(q)$ is of the form 
\begin{equation}
p_{\bf a'}^{\bf a}(q)  = q {f}_{\bf a'}^{\bf a}(q)- q^{-1} f_{\bf a'}^{\bf a}(q^{-1}),
\end{equation}
where each $f_{\bf a'}^{\bf a}(q) $ is a polynomial. 
Set 
\begin{equation} \label{eq:ba} 
b^{\bf a} = F^{\bf a} + \sum_{{\bf a'} \prec {\bf a}} q f_{\bf a'}^{\bf a}(q)  b^{\bf a'}.
\end{equation}
Replacing $F^{\bf a}$ with $b^{\bf a}$ does not change $\mathcal{L}$ and $b^{\bf a}= F^{\bf a}$ mod $q \mathcal{L}$. Then
\begin{equation}
\begin{aligned}
\bar b^{\bf a} & =  F^{\bf a} + \sum_{{\bf a'} \prec{\bf a}} (q {f}_{\bf a'}^{\bf a}(q)- q^{-1} f_{\bf a'}^{\bf a}(q^{-1})) b^{\bf a'} + \sum_{{\bf a'} \prec {\bf a}} q^{-1} f_{\bf a'}^{\bf a}(q^{-1})  b^{\bf a'} \\
& = F^{\bf a} + \sum_{{\bf a'} \prec {\bf a}} q f_{\bf a'}^{\bf a}(q)  b^{\bf a'} = b^{\bf a},
\end{aligned}
\end{equation}
so we have found the desired element.

Uniqueness is clear, since as the induction proceeds there is no choice. 
\end{proof}

\begin{remark}
The basis $B$ from Theorem \ref{th:cbe} is Lusztig's canonical basis (see \cite[Theorem 3.2]{L90b}). As in the above proof, it can be indexed as $B = \{ b^{\bf a} \}$ where the $\bf a$ are Lusztig data with respect to a fixed reduced expression of $w_0$. However, as in \S\ref{s:emq}, the indexing changes depending on the reduced expression. 
\end{remark}

\section{Properties of the canonical basis}

\subsection{Descent to modules}

\begin{theorem}
Fix a dominant integral weight $\lambda$ and write $V_\lambda= U^-_q(\g)/I_\lambda$. Then $B \cap I_\lambda$ spans $I_\lambda$. Equivalently, $\{ b+I_\lambda : b \in B, b \not\in I_\lambda \}$ is a basis for $V_\lambda$.  
\end{theorem}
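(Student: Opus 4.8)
The plan is to realize the quotient $V_\lambda$ as a ``based quotient'' and re-run the existence--uniqueness argument of Theorem~\ref{th:cbe} inside it. First I would pin down $I_\lambda$ concretely: as a left ideal of $U_q^-(\g)$ it is generated by the singular-vector elements $F_i^{(n_i+1)}$, where $n_i = \langle \lambda, \alpha_i^\vee\rangle$, coming from the defining relations $F_i^{n_i+1}v_\lambda = 0$ of the integrable irreducible $V_\lambda$. Two features of these generators are what make the argument run: each $F_i^{(n_i+1)}$ is bar-invariant (because $\overline{[n]}=[n]$), so $I_\lambda$ is bar-stable and the bar involution descends to $V_\lambda$; and each lies in $\mathcal{L}$, so $I_\lambda$ is compatible with the integral form. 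Consequently $\mathcal{L}_\lambda := (\mathcal{L}+I_\lambda)/I_\lambda$ is a bar-stable $\mathbb{Z}[q]$-lattice in $V_\lambda$ spanned by the images of the PBW basis $B_{\bf i}$.

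Next I would observe that the images $\{\, b + I_\lambda : b \in B,\ b \notin I_\lambda \,\}$ are automatically bar-invariant in $V_\lambda$ and congruent mod $q\mathcal{L}_\lambda$ to images of PBW monomials, since $B$ is bar-invariant and unit-triangular against $B_{\bf i}$. If these nonzero images form a basis of $\mathcal{L}_\lambda/q\mathcal{L}_\lambda$, then the verbatim uniqueness argument of Theorem~\ref{th:cbe}, now carried out in $V_\lambda$ with its induced bar involution and lattice, identifies them with the unique bar-invariant basis of $V_\lambda$ lifting the PBW images; in particular they form a $\mathbb{Q}(q)$-basis of $V_\lambda$. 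Granting that, the complement $B \cap I_\lambda$ is forced to be a basis of $I_\lambda$, which is exactly the assertion. So the whole theorem reduces to a single statement: the nonzero images of $B$ are linearly independent in $\mathcal{L}_\lambda/q\mathcal{L}_\lambda$.

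To get a handle on that, I would first determine a large supply of canonical basis elements lying in $I_\lambda$. The useful device is to work with a reduced expression ${\bf i}$ whose last root vector is simple, say $\beta_N = \alpha_i$, so that $F^{\bf a}_{\bf i} = Y\,F_i^{(a_N)}$ with $Y = F_{\beta_1}^{(a_1)}\cdots F_{\beta_{N-1}}^{(a_{N-1})}$. The divided-power identity gives $F_i^{(a_N)} \in U_q^-(\g)\,F_i^{(n_i+1)}$ whenever $a_N > n_i$, so, $I_\lambda$ being a left ideal, $F^{\bf a}_{\bf i} \in I_\lambda$ for all such ${\bf a}$; after accounting for the unit-triangular change of basis between $B_{\bf i}$ and $B$ (and using that $\mathcal{L}$ and $B_{\bf i}+q\mathcal{L}$ are independent of ${\bf i}$ by Theorem~\ref{th51}, so that one may arrange this simultaneously for every $i$), a corresponding collection of canonical basis elements is seen to lie in $I_\lambda$, with the convexity of Lemma~\ref{lem:convex} keeping the rearrangements under control.

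The hard part will be the converse bookkeeping: showing that \emph{exactly} these elements collapse, i.e.\ that the images not forced into $I_\lambda$ remain linearly independent modulo $q\mathcal{L}_\lambda$. This is the genuine representation-theoretic content, and it is where reduction to rank one enters: fixing $i$ and reading off the $F_i$-divided-power ``string'' through a given basis element, one must check that such a string either lies entirely in $I_\lambda$ or exits it cleanly, with no surviving cancellation modulo $q\mathcal{L}_\lambda$. This is the $\mathfrak{sl}_2$ calculation underlying the crystal structure, and arranging it compatibly for all $i$ at once --- rather than merely exhibiting the generators of $I_\lambda$ --- is the crux on which the argument turns.
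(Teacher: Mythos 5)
There is a genuine gap, and you name it yourself: your final paragraph defers ``the crux'' --- that the images of $B$ not forced into $I_\lambda$ remain linearly independent modulo $q\mathcal{L}_\lambda$ --- to an unspecified rank-one string analysis that you never carry out, so what you have written is not a proof. Moreover, the reduction you set up makes the problem strictly harder than necessary: independence of the surviving images mod $q$ in the lattice quotient is a \emph{stronger} statement than independence over $\Q(q)$, and none of the apparatus you build around it (the bar-stable quotient, re-running the existence--uniqueness argument of Theorem \ref{th:cbe} inside $V_\lambda$, arranging reduced expressions ``simultaneously for every $i$'') is needed. Once one knows that $B \cap I_\lambda$ spans $I_\lambda$, the statement about $V_\lambda$ is pure linear algebra: if a subset of a basis spans a subspace, the images of the remaining basis vectors automatically form a basis of the quotient. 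So the whole theorem reduces to the spanning claim you almost prove in your third paragraph, and your ``hard part'' evaporates.

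What you are missing to close that spanning claim is one small but decisive observation about the order $\prec$: since one of the two lexicographic orders defining $\prec$ is read starting from $a_N$, any ${\bf a'} \prec {\bf a}$ satisfies $a'_N \geq a_N$. This is built into the definition of $\prec$; it is not Lemma \ref{lem:convex}, which you invoke only vaguely to ``keep rearrangements under control.'' Taking ${\bf i}$ with $F_{\beta_N} = F_i$ (i.e.\ $i_N = \sigma(i)$), the PBW monomials with $a_N \geq n$ lie in and span $U^-_q(\g)F_i^n$ (right multiplying any PBW monomial by $F_i^n$ just raises the last divided power, up to a quantum binomial coefficient). By Theorem \ref{th:cbe} together with the observation above, $b^{\bf a}$ equals $F^{\bf a}_{\bf i}$ plus terms all having $a'_N \geq a_N$, so $b^{\bf a} \in U^-_q(\g)F_i^{a_N}$, and inverting the unitriangular transition matrix shows $\{ b^{\bf a} : a_N \geq n \}$ spans $U^-_q(\g)F_i^n$. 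Summing over $i$ in $I_\lambda = \sum_i U^-_q(\g)F_i^{c_i+1}$ finishes the proof; note that no compatibility between the reduced expressions chosen for different $i$ is required, precisely because $B$ itself does not depend on the reduced expression. This is exactly the paper's argument, which is a few lines long once Theorem \ref{th:cbe} is in hand.
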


\begin{proof}

Write $\lambda$ using fundamental weights, $\lambda = \sum c_i \omega_i$.
It is well known that 
\begin{equation} \label{eq:BGG}
I_\lambda = \sum_{i \in I} U^-_q(\g) F_i^{c_i+1}.
\end{equation}
Thus it suffices to show that
$B \cap U^-_q(\g) F_i^{n}$ spans $U^-_q(\g) F_i^{n}$ for all $n$. 

Fix a reduced expression ${\bf i}$ with $i_N=\sigma(i)$, so that $F_{\beta_N}=F_i$. Then it is clear that $B_{\bf i} \cap U^-_q(\g) F_i^{n}$ spans $U^-_q(\g) F_i^{n}$. The change of basis from $B_{\bf i}$ to $B$ is upper triangular, so the canonical basis elements corresponding to elements in $B_{\bf i} \cap U^-_q(\g)  F_i^n $
are all still in $U^-_q(\g) F_i^{n}$, giving a spanning set. 
\end{proof}

\subsection{Crystal combinatorics}  \label{ss:Lcrystal}
In a sense we already have a combinatorial object that could be called a crystal. With that point of view the underlying set is the basis $B + q \mathcal{L}$ of $\mathcal{L}/q \mathcal{L}$. 
To perform a crystal operator $f_i$, choose a reduced expression ${\bf i}$ where $i_1=i$. On $B_{\bf i}$,
define 
\begin{equation} \label{eq:LCEG}
 f_i ( F_i^{(a_1)} F_{\beta_2}^{(a_2)} \cdots F_{\beta_N}^{(a_N)} )=  F_i^{(a_1+1)} F_{\beta_2}^{(a_2)} \cdots F_{\beta_N}^{(a_N)}.
 \end{equation}
This descends to an operation on $ B_{\bf i} +q \mathcal{L} =B+ q \mathcal{L}$. 
One must use different reduced expressions to define each $f_i$, and the full structure is somewhat complex. 

Since $B$ itself can be hard to work with, we often choose a reduced expression ${\bf i}$, and think of the crystal operators as acting on $B_{\bf i} + q \mathcal{L}$ (which is of course equivalent). With this point of view, the crystal operator $f_i$ acts as follows (see \S\ref{s:ex} for an example).
\begin{itemize}

\item Perform a series of braid moves to get a new reduced expression ${\bf i'}$ with $i'_i=i$, and use the piecewise linear functions to find the $F^{\bf a'}_{\bf i'} \in B_{\bf i'}$ which is equal to $F^{\bf a}_{\bf i}$ mod $q$.

\item Add 1 to $a'_1$. 

\item Perform a series of braid moves to get ${\bf i'}$ back to ${\bf i}$ and use the piecewise linear bijections to find the corresponding $F^{\bf \bar a}_{\bf i} \in B_{\bf i}$.
Then $f_i(F^{\bf a}_{\bf i}) = F^{\bf \bar a}_{\bf i}$. 

\end{itemize}

We now show that the structure defined above matches Kashiwara's crystal $B(\infty)$ from \cite{Kash}. This has previously been observed by Lusztig \cite{L90-2} (see also \cite{GL93,L11}) and by Saito \cite{Sai}. We give a somewhat different proof. 

We first review Kashiwara's construction of $B(\infty)$, roughly following \cite[\S 3]{Kash}.
For each $i \in I$, elementary calculations show that, for any $X \in U_q^-(\mathfrak{g})$,
\begin{equation} \label{eq:XAB}
E_i X= P K_i^{-1} + Q  K_i + X  E_i 
\end{equation}
for some $P,Q \in U_q^-(\mathfrak{g})$.  
Define $e'_i: U_q^-(\g) \rightarrow U_q^-(\g)$ by 
$e'_i(X)= P$. 
As a vector space, 
\begin{equation} \label{eq:eif}
U_q^-(\mathfrak{g}) \cong {\Bbb Q}(q)[F_i] \otimes \text{ker}(e'_i),
\end{equation}
where the isomorphism is multiplication. Define operators $\tilde F_i$ (the Kashiwara operators) by, for all $Y \in \text{ker}(e'_i)$ and $n \geq 0$, 
\begin{equation}
\tilde F_i (F_i^{(n)} Y)= F_i^{(n+1)} Y.
\end{equation}

Let ${\Bbb Q}[q]_0$ be the ring of rational functions which are regular at $q=0$, and
let $\mathcal{L}(\infty)$ to be the ${\Bbb Q}[q]_0$ lattice generated by all sequences of $\tilde F_i$ acting on $1 \in U_q^-(\g)$. There is a unique basis $B(\infty)$ for $\mathcal{L}(\infty)/q\mathcal{L}(\infty)$ such that the residues of all the $\tilde F_i$ act by partial permutations. This basis, along with the residues of the $\tilde F_i$, is $B(\infty)$.

\begin{theorem} \label{th:can-crystal}
Let $B$ be the canonical basis from Theorem \ref{th:cbe}. Then  
$\mathcal{L}(\infty)= \operatorname{span}_{{\Bbb Q}[q]_0} B$, and $B(\infty)=B + q \mathcal{L}(\infty)$. Furthermore, the crystal operators $\tilde F_i$ mod $q$ coincide with the operators described at the beginning of \S\ref{ss:Lcrystal}. 
\end{theorem}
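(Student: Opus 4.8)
The plan is to reduce the entire theorem to a single computation, namely that the Kashiwara operator $\tilde F_i$ acts on the PBW basis of a reduced expression beginning with $i$ exactly as the combinatorial operator $f_i$ of \eqref{eq:LCEG}, by adding one to $a_1$. The crux is the following lemma: if $i_1=i$, then for every Lusztig datum ${\bf a}$ the ``tail'' $Y:=F_{{\bf i};\beta_2}^{(a_2)}\cdots F_{{\bf i};\beta_N}^{(a_N)}$ lies in $\ker(e'_i)$. Granting this, the decomposition \eqref{eq:eif} writes $F_{\bf i}^{\bf a}=F_i^{(a_1)}Y$ with $Y\in\ker(e'_i)$, so by the very definition of $\tilde F_i$ we get $\tilde F_i(F_{\bf i}^{\bf a})=F_i^{(a_1+1)}Y=F_{\bf i}^{(a_1+1,a_2,\dots,a_N)}$ on the nose. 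In particular $\tilde F_i$ preserves $B_{\bf i}$ and acts there by the combinatorial rule.

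To prove the lemma I would exploit the interaction between the braid operator $T_i$ and $e'_i$. For $k\geq 2$ each root vector has the form $F_{{\bf i};\beta_k}=T_i(Z)$ with $Z=F_{{\bf i}'';\beta_{k-1}}\in U_q^-(\g)$ for the shifted word ${\bf i}''=(i_2,\dots,i_N,\sigma(i_1))$, so by Lemma \ref{lem:inm} each $F_{{\bf i};\beta_k}$ lies in $U_q^-(\g)\cap T_i(U_q^-(\g))$. The key algebraic input is that $U_q^-(\g)\cap T_i(U_q^-(\g))\subseteq\ker(e'_i)$, combined with the fact that $e'_i$ is a twisted derivation, so that $\ker(e'_i)$ is a subalgebra; the product $Y$ of such root vectors then lies in $\ker(e'_i)$. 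This is precisely the point where the relationship between the braid action and the Kashiwara operators must be pinned down, and I expect it to be the main obstacle: once the lemma holds everything else is formal.

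With the lemma in hand, set $\LL_0=\operatorname{span}_{\Q[q]_0}B=\operatorname{span}_{\Q[q]_0}B_{\bf i}$ (equal because the change of basis in Theorem \ref{th:cbe} is unit-triangular with entries in $\Q[q]_0$). For each $i$ I can choose ${\bf i}$ with $i_1=i$, whence $\tilde F_iB_{\bf i}\subseteq B_{\bf i}$ gives $\tilde F_i\LL_0\subseteq\LL_0$; since $1\in\LL_0$ and $\LL(\infty)$ is generated by strings of $\tilde F_i$'s on $1$, this yields $\LL(\infty)\subseteq\LL_0$. For the reverse inclusion I would work modulo $q$: any nonzero-weight $b$ lies outside $\bigcap_i\ker(e'_i)=\Q(q)\cdot 1$, so some $e'_i(b)\neq 0$, i.e.\ $\varepsilon_i(b)>0$, and applying raising operators (which strictly lower the weight) reaches $1$ in finitely many steps. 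Hence each $b\in B$ is, modulo $q\LL_0$, a string of $\tilde F_i$'s on $1$, so $B\subseteq\LL(\infty)+q\LL_0$; Nakayama's lemma applied weight space by weight space (each finite-dimensional) over the local ring $\Q[q]_0$ then upgrades this to $\LL_0=\LL(\infty)$.

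Finally, the basis statement follows from uniqueness. Reducing modulo $q\LL(\infty)$, the lemma shows the residues of the $\tilde F_i$ act on $B+q\LL(\infty)$ by partial permutations matching the combinatorial operators of \S\ref{ss:Lcrystal}, and the reachability argument shows $B+q\LL(\infty)$ is obtained from $1$ by these operators; since $B(\infty)$ is the unique basis with this property, $B(\infty)=B+q\LL(\infty)$. The closing assertion about the crystal operators is then just the exact computation of the lemma read modulo $q$. The only step requiring care beyond the lemma itself is the reachability-from-$1$ claim feeding the Nakayama argument, which rests on the elementary but essential fact that $\bigcap_i\ker(e'_i)$ meets each nonzero weight space trivially.
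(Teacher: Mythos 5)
Your skeleton is the paper's: your key lemma is literally Lemma \ref{lem:goodi} (the tail of a PBW monomial for a word with $i_1=i$ lies in $\ker e'_i$, so $\tilde F_i$ bumps $a_1$), and granted that lemma, your lattice comparison, partial-permutation statement, and appeal to the uniqueness of $B(\infty)$ track the paper's proof, in somewhat more detail. But at exactly the point you flag as the main obstacle there is a genuine gap: you assert, without proof, the inclusion $U_q^-(\g)\cap T_i(U_q^-(\g))\subseteq \ker(e'_i)$. The statement is true --- it is essentially Saito's characterization of $\ker e'_i$ via the braid action \cite{Sai}, cf.\ \cite[Proposition 38.1.6]{L:1993} --- but nothing in this paper gives it for free, and proving it is of the same order of difficulty as what it is meant to replace. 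The paper's Lemmas \ref{lem:seqmoves} and \ref{lem:acts-on-vector} (the induction on $\operatorname{ht}(\beta_k)$, driven by braid moves, showing $E_iF_{\beta_k}-F_{\beta_k}E_i\in U_q^-(\g)K_i$ for $k\geq 2$) exist precisely to serve as an elementary, self-contained substitute for that inclusion; your reduction to it (via $F_{\beta_k}=T_i(Z)$ with $Z\in U_q^-(\g)$ by Lemma \ref{lem:inm}, the twisted-derivation property making $\ker e'_i$ a subalgebra, and the dimension count from \eqref{eq:eif}) is fine, so your route closes once you either cite Saito/Lusztig or reprove the inclusion along the lines of Lemma \ref{lem:acts-on-vector}.

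There is a second, subtler gap in your reachability step. First, $\bigcap_i\ker(e'_i)=\Q(q)\cdot 1$ is also asserted rather than proved; Kashiwara's proof uses the nondegenerate bilinear form on $U_q^-(\g)$, which this paper never introduces, so it is not available ``elementarily'' here. More seriously, your inference ``$e'_i(b)\neq 0$, i.e.\ $\varepsilon_i(b)>0$'' conflates two different statements: $e'_i(b)\neq 0$ in $U_q^-(\g)$ does not imply $\tilde E_i b\not\equiv 0$ modulo $q\mathcal{L}(\infty)$, since the $F_i$-divisible part of $b$ in the decomposition \eqref{eq:eif} could lie entirely in $q$ times the lattice; in that case the raising step vanishes mod $q$ and your Nakayama argument has nothing to feed on. What is actually needed is the mod-$q$ statement that every nonzero Lusztig datum acquires a positive first coordinate after braid moves to some word with first letter $i$; this can be proved by induction on the first nonzero position $k$, using Matsumoto's theorem to rearrange the word while fixing the first $k-1$ letters and then observing that the piecewise linear bijection sends $(0,a,c)\mapsto(a+c,\min\{0,c\},\ldots)$ with $a+c>0$. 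This is what the paper compresses into its ``simple inductive argument,'' and your Nakayama framing organizes the deduction nicely --- but only after that combinatorial fact is actually established.
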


Before proving Theorem \ref{th:can-crystal} we need some preliminary Lemmas. 

\begin{lemma} \label{lem:seqmoves}
Fix $i \in I$, a reduced expression ${\bf i}$, and a positive root $\beta$ with $( \beta, \alpha_i )\leq 0$. Then there is a sequence of braid moves, none of which affect the relative positions of $\alpha_i$ and $\beta$ in the corresponding order on roots, with the last move being a three term braid move with $\beta$ the middle root (so that $F_\beta$ changes).  
\end{lemma}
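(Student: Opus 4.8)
The plan is to pass to the language of convex orders. A reduced expression $\bfi$ determines the total (reflection) order $\beta_1 < \cdots < \beta_N$ on the positive roots, and by Lemma~\ref{lem:bp} the two kinds of braid moves act on this order transparently: a two term move transposes a pair of consecutive roots $\beta_k,\beta_{k+1}$, and this is possible exactly when $\beta_{k+1}-\beta_k$ is not a root; a three term move reverses a consecutive braidable triple $\gamma,\gamma+\delta,\delta$, swapping the outer two roots, fixing every other root vector, and altering only the middle one. The move we want to end on is thus a three term move in which $\beta$ is the middle root; for this $\beta$ must be non-simple (so that $F_\beta$ can change) and its two neighbours $\gamma,\delta$ must satisfy $\gamma+\delta=\beta$.

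First I would record the one place the hypothesis $(\beta,\alpha_i)\le 0$ enters. In any decomposition $\beta=\gamma+\delta$ into positive roots, neither summand can be $\alpha_i$: if $\gamma=\alpha_i$ then $\delta=\beta-\alpha_i$ would be a positive root, which forces $(\beta,\alpha_i)>0$. In particular $\alpha_i\neq\beta$ as well. Consequently, for the terminal three term move $\gamma,\beta,\delta\mapsto\delta,\beta,\gamma$ the root $\alpha_i$ lies strictly outside the three affected positions and is not moved, so $\beta$ keeps its position relative to $\alpha_i$. This is exactly what makes the last move legal, and it reduces the statement to the purely combinatorial reachability claim: starting from $\bfi$, using only braid moves that never transpose $\alpha_i$ past $\beta$, one can reach a convex order in which $\beta$ is the middle of a braidable triple.

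To prove reachability I would use an extremal/greedy procedure. By symmetry (reversing the order) assume $\alpha_i<\beta$. Slide $\beta$ leftward toward $\alpha_i$ by two term transpositions until it is blocked. In the generic case the slide halts at a predecessor $\gamma\neq\alpha_i$ for which $\beta-\gamma$ is a positive root, exhibiting $\gamma$ as the lower summand of a decomposition $\beta=\gamma+\delta$ with $\delta=\beta-\gamma$. It then remains to bring the complementary summand $\delta$ into the slot immediately after $\beta$, using moves among the roots lying after $\alpha_i$ (where $\beta$ now sits), which therefore never cross $\alpha_i$; once $\gamma,\beta,\delta$ are consecutive they automatically form a braidable triple and the final move is the desired one. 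I would organise ``bringing $\delta$ into place'' as an induction, for instance on $\operatorname{ht}(\beta)$ together with the number of roots separating $\delta$ from $\beta$.

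The main obstacle is that braid moves are only \emph{locally} available: a two term transposition of consecutive roots is possible precisely when their difference is not a root, and the three term move that would otherwise rescue a blocked slide requires a specific third root to be already adjacent. One must therefore show that the greedy procedure always either makes progress or has already produced the triple, ruling out genuine dead ends while respecting the no crossing constraint. The delicate degenerate cases are a blockage at $\gamma$ with $\beta-\gamma$ a \emph{negative} root, and the pin where $\beta$ comes to rest immediately after $\alpha_i$ with every candidate lower summand temporarily on the far side of $\alpha_i$. Here the hypothesis is decisive: since $\beta-\alpha_i$ is not a root, the pin forces $(\beta,\alpha_i)=0$, so the nodes carrying $\alpha_i$ and $\beta$ are non-adjacent and $\beta$ can be slid away from $\alpha_i$ without disturbing it, breaking the pin. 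Making this termination argument precise — equivalently, showing that the convex orders with $\alpha_i$ and $\beta$ in a fixed relative position form a single connected component under the permitted moves — is the crux of the proof.
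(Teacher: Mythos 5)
Your local analysis is correct as far as it goes: consecutive roots in the order coming from a reduced word satisfy $(\beta_k,\beta_{k+1})\in\{0,1\}$, two-term moves swap exactly the orthogonal adjacent pairs, a consecutive triple $\gamma,\gamma+\delta,\delta$ does force the letter pattern of a three-term braid move, and your observation that $(\beta,\alpha_i)\le 0$ keeps $\alpha_i$ out of the three slots affected by the terminal move is a correct use of the hypothesis. But the proposal stops exactly where the lemma begins. The entire content of the statement is the reachability claim --- that from any ${\bf i}$ one can reach, by moves never transposing $\alpha_i$ past $\beta$, an order in which $\beta$ is the middle root of a braidable consecutive triple --- and you prove this only by a greedy slide whose termination you yourself concede is open. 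The degenerate branches you flag are genuine failures of the greedy scheme, not technicalities: a blocking predecessor $\gamma$ with $\gamma-\beta$ a positive root yields no decomposition of $\beta$ at all, and the ``pin'' analysis is muddled --- if $\beta$ comes to rest immediately after $\alpha_i$ with $(\beta,\alpha_i)=0$, the one available two-term move is precisely the forbidden transposition of $\alpha_i$ and $\beta$, and your phrase ``the nodes carrying $\alpha_i$ and $\beta$ are non-adjacent'' only parses when $\beta$ is simple, which is exactly the case where the conclusion of the lemma is vacuous or false. So the crux (connectedness of the permitted-move graph, equivalently termination of the procedure) is named but never supplied, and nothing in the proposal indicates how dead ends would be ruled out.

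For comparison, the paper's proof bypasses any such connectivity analysis with a short global argument. First reduce to $i_1=i$: if $\beta_j=\alpha_i$, the prefix $s_{i_1}\cdots s_{i_j}$ sends $\alpha_i$ to a negative root, so it admits a reduced expression beginning with $s_i$, and rewriting the prefix (Matsumoto's theorem) never touches the position of $\beta$. Next, since $(\beta,\alpha_i)\le 0$ while $(\beta,\rho)>0$, choose $\ell\neq i$ with $(\beta,\alpha_\ell)>0$, and consider two reduced expressions for $w_0$, both beginning with $s_i$: one of the form $s_is_\ell\cdots$ (or $s_is_\ell s_i\cdots$ when $i,\ell$ are adjacent), in which $\alpha_\ell$ appears among the first roots, and one of the form $s_i\cdots s_{\sigma(\ell)}$, in which $\alpha_\ell$ is the last root. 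Both are reachable from ${\bf i}$ by braid moves fixing the initial letter, so $\alpha_i$ stays pinned in first position and never crosses $\beta$. The relative order of $\beta$ and $\alpha_\ell$ differs between the two targets, so along one of the two sequences $\beta$ must cross $\alpha_\ell$; a two-term move cannot do this since it only swaps orthogonal roots, and in the three-term move that does, $\beta$ must be the middle root, because $\beta+\alpha_\ell$ is not a root in type ADE when $(\beta,\alpha_\ell)>0$ and a simple root cannot be a middle term. This single application of Matsumoto's theorem is precisely the ingredient your sketch is missing; to complete your approach you would have to prove a connectivity statement of essentially the same strength, at which point the greedy machinery becomes superfluous.
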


\begin{proof}
Fix $j,k$ so that  $\beta_j=\alpha_i$ and $\beta_k=\beta$. Without loss of generality $j<k$.
The prefix $w=s_{i_1} \cdots s_{i_{j}}$ satisfies $w^{-1} \alpha_i = - \alpha_j$, which is a negative root, so $w$ has a reduced expression of the form $ s_i \cdots$. One can perform a sequence of braid moves relating these two reduced expressions which do not change the position of $\beta$. Thus we may assume $i_1=i$. Since
$( \beta, \alpha_i ) \leq 0$ and $( \beta, \rho ) > 0$, we must have
$( \beta, \alpha_\ell ) > 0$ for some other $\ell$. 

 If $( \alpha_i, \alpha_\ell ) =0$, then there are reduced expressions for $w_0$ of the form 
\begin{equation} s_i s_\ell \cdots \quad \text{ and } \quad s_i \cdots s_{\sigma (\ell)}, \end{equation}
and both can be reached from ${\bf i}$ by performing braid moves that do not change the position of $\alpha_i$. Certainly the relative positions of $\beta$ and $\alpha_\ell$ are different in these two expressions, so one of these sequences moves $\beta$ past $\alpha_\ell$. Since $( \beta, \alpha_\ell ) > 0$, at that step $\beta$ is the middle root for a 3 term braid move.

 If $(\alpha_i, \alpha_\ell ) =-1$, then there are reduced expressions for $w_0$ of the form 
\begin{equation}
s_i s_\ell s_i \cdots \quad \text{ and } \quad s_i \cdots s_{\sigma (\ell)},
\end{equation}
and the same argument works. 
\end{proof}

\begin{lemma} \label{lem:acts-on-vector}
Fix a reduced expression ${\bf i}$, and let $j$ be such that $\beta_j = \alpha_i$ is a simple root. For all $k>j$, $$E_i F_{\beta_k} - F_{\beta_k} E_i \in U_q^-(\mathfrak{g}) K_i.$$ 
\end{lemma}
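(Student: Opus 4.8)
The plan is to exploit the formula \eqref{eq:XAB}, which asserts that for any $X \in U_q^-(\g)$ we have $E_i X = P K_i^{-1} + Q K_i + X E_i$, and to show that when $X = F_{\beta_k}$ with $\beta_k$ to the right of the simple root $\alpha_i$ in the chosen order, the ``$P$'' term vanishes and only a $U_q^-(\g) K_i$ contribution survives. Equivalently, $E_i F_{\beta_k} - F_{\beta_k} E_i$ should be of the form $Q K_i$ with $Q \in U_q^-(\g)$. The cleanest route is to conjugate by the braid operators so that $\alpha_i$ sits at the front of the word, reducing to a statement about how $E_i$ commutes past a root vector built from letters all occurring after position $j$.

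First I would use Lemma \ref{lem:bp}\eqref{bp3} to arrange, after moving to a convenient reduced expression, that $\beta_j = \alpha_i$ with $j$ small (ideally $i_1 = i$, $j=1$); the relative order of $\alpha_i$ and $\beta_k$ being preserved means the hypothesis $k > j$ is maintained. With $i_1 = i$, the operator $T_{i}^{-1} = T_{i_1}^{-1}$ sends $F_{\beta_k} = T_{i_1}\cdots T_{i_{k-1}} F_{i_k}$ to $T_{i_2}\cdots T_{i_{k-1}} F_{i_k}$, which by Lemma \ref{lem:inm} (applied to the shifted word) still lies in $U_q^-(\g)$. The point is that in the braid-transported picture the root vector no longer involves the letter $i$ at the front, so the relevant commutator with $E_i$ becomes transparent.

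The key computation is then a rank-one (or rank-two) statement: I would verify the assertion directly on generators and propagate it through the braid automorphisms. Concretely, one checks that $[E_i, F_j] = 0$ for $j \neq i$ (from the defining relations $E_iF_j - F_jE_i = 0$ when $i \neq j$), so that $E_i$ commutes with every $F_{i_m}$ appearing for $m > j$ up to terms landing in $U_q^-(\g)K_i$. The structural reason $\alpha_i$ being a simple root to the \emph{left} of $\beta_k$ forces the $K_i^{-1}$ (i.e. the $P = e_i'(\cdot)$) term to vanish is that $e_i'(F_{\beta_k}) = 0$: the root vector $F_{\beta_k}$ lies in $\ker(e_i')$ because, after transporting $\alpha_i$ to the front, $F_{\beta_k}$ is expressible without the generator $F_i$ and hence is annihilated by $e_i'$. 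This is precisely the decomposition \eqref{eq:eif} at work.

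\textbf{The main obstacle} I anticipate is rigorously controlling how $e_i'$ (equivalently the $K_i^{-1}$-coefficient $P$ in \eqref{eq:XAB}) behaves under the braid operators $T_{i_m}^{-1}$ for $m \geq 2$: the operators $e_i'$ do \emph{not} commute with the braid group action in a naive way, so I cannot simply conjugate the identity $e_i'(F_{\beta_k}) = 0$ through $T_i$. The careful part is to argue that the specific structure here — $\alpha_i$ being the very first root, and all letters contributing to $\beta_k$ coming strictly later — is enough to guarantee $P = 0$. I expect this reduces to checking that $E_i$ commutes with $T_{i_2}\cdots T_{i_{k-1}} F_{i_k}$ modulo $U_q^-(\g)K_i$, which follows from $E_i$ commuting with the subalgebra generated by $\{F_{i_2}, \ldots, F_{i_k}, \ldots\}$ after the first braid step has absorbed the $i = j$ contribution. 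Once this vanishing is established, collecting the surviving term as $Q K_i$ with $Q \in U_q^-(\g)$ is immediate.
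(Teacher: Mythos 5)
Your proposal has a genuine gap at its central step. The claim that, once $i_1 = i$, the root vector $F_{\beta_k}$ (or its braid transport $T_{i}^{-1}F_{\beta_k}$) ``is expressible without the generator $F_i$'' is false, and with it the deduction that $E_i$ commutes with the relevant subalgebra. Already in $\mathfrak{sl}_3$ with ${\bf i} = (1,2,1)$ and $i = 1$ one has $F_{\beta_2} = T_1F_2 = F_2F_1 - qF_1F_2$ and $T_1^{-1}F_{\beta_3} = T_2F_1 = F_1F_2 - qF_2F_1$: both visibly involve $F_1$. Relatedly, the letters $i_2,\ldots,i_k$ can include $i$ again (as in this example), so ``$E_i$ commutes with the subalgebra generated by $\{F_{i_2},\ldots,F_{i_k}\}$'' fails outright, since $E_iF_i - F_iE_i = \frac{K_i-K_i^{-1}}{q-q^{-1}} \neq 0$. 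The observation $[E_i,F_j]=0$ for $j \neq i$ disposes only of the trivial case $\operatorname{ht}(\beta_k)=1$; the entire content of the lemma is that when $F_i$ genuinely occurs in the expression for $F_{\beta_k}$ (i.e.\ when $\alpha_i$ lies in the support of $\beta_k$) the $K_i^{-1}$ contributions still cancel, and your proposal contains no mechanism for this. There is also a circularity: identifying $\ker e_i'$ with the span of PBW monomials having no $F_i$ factor is Lemma \ref{lem:goodi}, which the paper deduces \emph{from} Lemma \ref{lem:acts-on-vector}; the abstract decomposition \eqref{eq:eif} alone does not tell you that any particular element lies in $\ker e_i'$, so asserting $e_i'(F_{\beta_k})=0$ merely restates the lemma. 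Note finally that conjugation does not transport the statement: $T_i^{-1}(E_i) = -K_i^{-1}F_i$ and $T_i^{-1}(U_q^-(\g)K_i) \neq U_q^-(\g)K_i$, so the ``transported picture'' is not the same-shaped problem --- the obstacle you yourself flag is real, and it is never resolved.

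The missing idea, which is how the paper proceeds, is an induction on $\operatorname{ht}(\beta_k)$ driven by braid moves. Using Lemma \ref{lem:seqmoves}, one arranges a three-term braid move with $\beta_k$ as the middle root; when $(\beta_k,\alpha_i)\leq 0$ (or when $\alpha_i$ has not crossed $\beta_k$) this gives $F_{\beta_k} = F_{\beta_{k+1}}F_{\beta_{k-1}} - qF_{\beta_{k-1}}F_{\beta_{k+1}}$ with both factors of smaller height, so the inductive hypothesis applies. In the remaining case, where $\alpha_i$ moves past $\beta_k$, one writes $F_{\beta_k} = F_{\beta_k-\alpha_i}F_i - qF_iF_{\beta_k-\alpha_i}$ and computes directly with $E_iF_i - F_iE_i = \frac{K_i-K_i^{-1}}{q-q^{-1}}$; the $K_i^{-1}$ terms cancel precisely because $K_i^{-1}F_{\beta_k-\alpha_i}K_i = q^{-1}F_{\beta_k-\alpha_i}$. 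That explicit cancellation is the crux of the lemma, and it is exactly the computation your proposal would need but does not perform.
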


\begin{proof}
Proceed by induction on the height $\text{ht}(\beta_k)$, the case where $\beta_k$ is a simple root $\alpha_\ell \neq \alpha_i$ being trivial since $E_i F_{\beta_k} - F_{\beta_k} E_i=0$ by Serre's relations. 

So, assume $\text{ht}(\beta_k) \geq 2$. 
If $( \beta_k, \alpha_i ) \leq 0$, then by Lemma \ref{lem:seqmoves} we can do a sequence of braid moves that don't change the relative positions of $\alpha_i$ and $\beta_k$ and so that the last is a three term move with $\beta$ in the middle. 
At that step, by Lemma \ref{lem:bp},
\begin{equation}
F_{\beta_k} = F_{\beta_{k+1}} F_{\beta_{k-1}} - q F_{\beta_{k-1}} F_{\beta_{k+1}},
\end{equation}  
where $\text{ht}(\beta_{k-1}), \text{ht}(\beta_{k+1}) < \text{ht}(\beta_k)$. The claim holds for $ F_{\beta_{k-1}}$ and $F_{\beta_{k+1}}$ by induction, and so it easily follows for $F_{\beta_k}$. 

If $( \beta_k, \alpha_i ) >0$, perform any sequence of braid moves until $\beta_k$ is the middle term of a three term move. If 
 $\alpha_i$ has not moved past $\beta_k$ the result follows as in the previous paragraph. Otherwise at the step 
where $\alpha_i$ moves past $\beta_k$, we see that $\beta_k$ is the middle term of a three term move affecting the roots $\alpha_i, \beta_k, \beta_k-\alpha_i$, so, again using Lemma \ref{lem:bp},
\begin{equation}
\begin{aligned}
E_i F_{\beta_k}  \hspace{-0.1cm} -  F_{\beta_k} E_i& = E_i \left( F_{\beta_{k}-\alpha_i}F_i  - q F_i F_{\beta_{k}-\alpha_i} \right) - \left( F_{\beta_{k}-\alpha_i}F_i  - q F_i F_{\beta_{k}-\alpha_i} \right) E_i\\
&= F_{\beta_{k}-\alpha_i} \frac{K_i-K_i^{-1}}{q-q^{-1}}  -q  \frac{K_i-K_i^{-1}}{q-q^{-1}} F_{\beta_{k}-\alpha_i} + \text{terms in $U_q^- K_i$.}
\end{aligned}
\end{equation}
The fact that the other terms are in $U_q^- K_i$ uses induction. The claim follows since $K_i^{-1} F_{\beta_{k}-\alpha_i} K_i = q^{-1} F_{\beta_{k}-\alpha_i}$. 
\end{proof}

\begin{lemma} \label{lem:goodi} 
Fix $i$ and $\bfi$ such that $i_1=i$. Then 
$$\ker e_i'= \operatorname{span} \{ F_{\beta_2}^{(a_2)} \cdots F_{\beta_{N}}^{(a_{N})}  \};$$
that is, the span of PBW basis elements where the exponent of $F_i$ is 0. In particular, $\tilde F_i$ acts on $B_\bfi$ by simply increasing the exponent of $F_i$ by 1. 
\end{lemma}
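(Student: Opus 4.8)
The plan is to prove the two inclusions of the asserted equality of subspaces, writing $V := \operatorname{span}\{ F_{\beta_2}^{(a_2)} \cdots F_{\beta_N}^{(a_N)} \}$ for the span of PBW monomials with vanishing $F_i$-exponent. The two essential inputs are Lemma~\ref{lem:acts-on-vector} together with the uniqueness of the triangular decomposition, and the fact that $e_i'$ is a twisted derivation, $e_i'(XY) = X\, e_i'(Y) + q^{-(\alpha_i, \wt Y)} e_i'(X)\, Y$ for homogeneous $X, Y$ (the precise power of $q$ is immaterial below); the latter is an elementary consequence of \eqref{eq:XAB} and the commutation of $K_i$ past $U_q^-(\g)$.

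First I would show $V \subseteq \ker e_i'$. Since $i_1 = i$ we have $\beta_1 = \alpha_i$ and $F_{\beta_1} = F_i$, so Lemma~\ref{lem:acts-on-vector} applies with $j = 1$ to every $k \geq 2$, giving $E_i F_{\beta_k} - F_{\beta_k} E_i \in U_q^-(\g) K_i$. On the other hand the defining relation \eqref{eq:XAB} gives $E_i F_{\beta_k} - F_{\beta_k} E_i = e_i'(F_{\beta_k}) K_i^{-1} + Q K_i$ for some $Q \in U_q^-(\g)$. Because $K_i^{-1}$ and $K_i$ are linearly independent over $U_q^-(\g)$ in the triangular decomposition, membership in $U_q^-(\g) K_i$ forces $e_i'(F_{\beta_k}) = 0$ for all $k \geq 2$. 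The twisted derivation property then shows $\ker e_i'$ is a subalgebra; since it contains $1$ and every $F_{\beta_k}$ with $k \geq 2$ (and divided powers only rescale), it contains all of $V$.

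For the reverse inclusion I would compare two decompositions of $U_q^-(\g)$ by dimension in each weight space. By Theorem~\ref{th:is-a-basis} the monomials $F_i^{(a_1)} F_{\beta_2}^{(a_2)} \cdots F_{\beta_N}^{(a_N)}$ form a basis, so grouping by $a_1$ gives $U_q^-(\g) = \bigoplus_{a_1 \geq 0} F_i^{(a_1)} V$, whereas \eqref{eq:eif} gives $U_q^-(\g) = \bigoplus_{a_1 \geq 0} F_i^{(a_1)} \ker e_i'$. Fixing a weight $\nu$ and summing dimensions over $a_1$, both decompositions compute $\dim U_q^-(\g)_\nu$, so $\sum_{a_1} \dim V_{\nu + a_1 \alpha_i} = \sum_{a_1} \dim (\ker e_i')_{\nu + a_1 \alpha_i}$. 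Since $V_\mu \subseteq (\ker e_i')_\mu$ term by term and all these spaces are finite dimensional, equality of the sums forces $V_\mu = (\ker e_i')_\mu$ for every $\mu$, i.e. $V = \ker e_i'$.

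Finally, the ``in particular'' statement is immediate: every $F_\bfi^{\bf a} \in B_\bfi$ factors as $F_i^{(a_1)} Y$ with $Y = F_{\beta_2}^{(a_2)} \cdots F_{\beta_N}^{(a_N)} \in V = \ker e_i'$, so by definition $\tilde F_i(F_\bfi^{\bf a}) = F_i^{(a_1+1)} Y$, the PBW monomial with $a_1$ increased by one and all other exponents unchanged. I expect the main obstacle to be the reverse inclusion: the containment $V \subseteq \ker e_i'$ alone does not pin down the kernel, and one genuinely needs that the PBW decomposition along $F_i$ and Kashiwara's decomposition \eqref{eq:eif} have matching graded dimensions, which is exactly what the dimension count above supplies.
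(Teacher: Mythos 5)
Your proof is correct and takes essentially the same approach as the paper: you establish $\operatorname{span}\{F_{\beta_2}^{(a_2)}\cdots F_{\beta_N}^{(a_N)}\}\subseteq\ker e_i'$ from Lemma \ref{lem:acts-on-vector} together with the triangular decomposition, and then get equality by the graded dimension count coming from \eqref{eq:eif}, which is exactly what the paper compresses into ``the span has the correct graded dimension.'' The only cosmetic difference is that you package the commutator step as the twisted-derivation property of $e_i'$ (making $\ker e_i'$ a subalgebra containing each $F_{\beta_k}$, $k\geq 2$), whereas the paper telescopes $E_i$ directly across the whole monomial; both are the same computation.
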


\begin{proof}
Certainly 
$E_i F_{\beta_2}^{(a_2)} \cdots F_{\beta_{N}}^{(a_{N})}$
is equal to $ F_{\beta_2}^{(a_2)} \cdots F_{\beta_{N}}^{(a_{N})} E_i$
plus a sum of terms each of which is a PBW monomial but with one root vector $F_\beta$ replaced by $E_i F_\beta-F_\beta E_i$. By Lemma \ref{lem:acts-on-vector} each of these is in $U_q^-(\mathfrak{g}) K_i$. Therefore, by definition, each 
$F_{\beta_2}^{(a_2)} \cdots F_{\beta_{N}}^{(a_{N})} $ is in $\ker e_i'$. It follows from \eqref{eq:eif} that the span of these vectors has the correct graded dimension, so is the whole kernel. 
\end{proof}

\begin{proof}[Proof of Theorem \ref{th:can-crystal}]
Fix $i$, and choose $\bfi$ such that $i_1=i$. By Lemma \ref{lem:goodi}, $\tilde F_i$ acts by partial permutations on the basis $B_\bfi$. By a simple inductive argument, this implies that $ \operatorname{span}_{{\Bbb Q}[q]_0} B_{\bf i}=  \operatorname{span}_{{\Bbb Q}[q]_0} B$ is the lattice generated by all sequences of $\tilde F_i$ acting on $1 \in U_q^-(\g)$. That is, it is $\mathcal{L}(\infty)$. It also shows that $\tilde F_i$ acts on $B_\bfi$ as in \eqref{eq:LCEG}, and hence agrees with the crystal operators described at the beginning of this section. 
\end{proof}

\section{Example:  Crystal operators from piecewise linear bijections} \label{s:ex}
As in \S\ref{ss:Lcrystal}, one can develop crystal theory entirely within Lusztig's setup, where the 
underlying set is $B_{\bf i}+q \mathcal{L}$ for a fixed ${\bf i}$. To illustrate, take $\mathfrak{g}=\mathfrak{sl}_4$ and the reduced expression $w_0= s_1s_2s_3s_1s_2s_1$. The corresponding order on positive roots is
\begin{equation}
\alpha_1, \quad  \alpha_1+\alpha_2, \quad  \alpha_1+\alpha_2+\alpha_3, \quad  \alpha_2, \quad  \alpha_2+\alpha_3, \quad  \alpha_3.
\end{equation}
Consider 
\begin{equation}
x= F_1^{(2)} F_{12}^{(3)} F_{123}^{(1)} F_2^{(2)} F_{23}^{(4)} F_3^{(2)}\in B_{\bf i}.
\end{equation}
Here we use e.g. $F_{23}$ to mean $F_{\alpha_2+\alpha_3}$. Applying $f_1$ is easy: just increase the exponent of $F_1$ to $(3)$.  Figure \ref{fig:Lf3} shows the calculation of $f_3(x)$.

\begin{figure}
\begin{tikzpicture}[scale=1.44]

\draw node at (-0.8,4) {$x=$};

\draw node at (0,4) {$F_{1}^{(2)}$};
\draw node at (1,4) {$F_{12}^{(3)}$};
\draw node at (2,4) {$F_{123}^{(1)}$};
\draw node at (3,4) {$F_2^{(3)}$};
\draw node at (4,4) {$F_{23}^{(3)}$};
\draw node at (5,4) {$F_3^{(2)}$};
\draw node at (5.5,4) {$\mbox{}$};

\draw node at (0,3.5) {$F_{1}^{(2)}$};
\draw node at (1,3.5) {${F_{12}}^{(3)}$};
\draw node at (2,3.5) {$F_{123}^{(1)}$};
{\color{red} \draw node at (3,3.5) {$F_3^{(3)}$};
\draw node at (4,3.5) {$F_{32}^{(2)}$};
\draw node at (5,3.5) {$F_2^{(4)}$};}

\draw node at (0,3) {$F_{1}^{(2)}$};
{\color{red} \draw node at (1,3) {${F_{3}}^{(1)}$};
\draw node at (2,3) {$F_{312}^{(3)}$};
\draw node at (3,3) {$F_{12}^{(1)}$};}
\draw node at (4,3) {$F_{32}^{(2)}$};
\draw node at (5,3) {$F_2^{(4)}$};

{\color{red} \draw node at (0,2.5) {$F_{3}^{(1)}$};
\draw node at (1,2.5) {${F_{1}}^{(2)}$};}
\draw node at (2,2.5) {$F_{312}^{(3)}$};
\draw node at (3,2.5) {$F_{12}^{(1)}$};
\draw node at (4,2.5) {$F_{32}^{(2)}$};
\draw node at (5,2.5) {$F_2^{(4)}$};

\draw node at (0,2) {{\color{red} $F_{3}^{(2)}$}};
\draw node at (1,2) {${F_{1}}^{(2)}$};
\draw node at (2,2) {$F_{312}^{(3)}$};
\draw node at (3,2) {$F_{12}^{(1)}$};
\draw node at (4,2) {$F_{32}^{(2)}$};
\draw node at (5,2) {$F_2^{(4)}$};

\draw node at (0,1.5) {$\vdots$};
\draw node at (1,1.5) {$\vdots$};
\draw node at (2,1.5) {$\vdots$};
\draw node at (3,1.5) {$\vdots$};
\draw node at (4,1.5) {$\vdots$};
\draw node at (5,1.5) {$\vdots$};

\draw node at (-0.8,1) {$f_3(x)=$};

\draw node at (0,1) {$F_{1}^{(2)}$};
\draw node at (1,1) {${F_{12}}^{(3)}$};
\draw node at (2,1) {$F_{123}^{(1)}$};
\draw node at (3,1) {{\color{blue} $F_{2}^{(2)}$}};
\draw node at (4,1) {{\color{blue} $F_{23}^{(4)}$}};
\draw node at (5,1) {$F_3^{(2)}$};

\end{tikzpicture}

\caption{Calculation of $f_3(x)$. 
Lines 2-4 each show the PBW monomial obtained after applying a braid move and the corresponding piecewise linear bijection. The factors colored red have changed. The notation e.g. $F_{312}$ just means $F_{\alpha_1+\alpha_2+\alpha_3}$, but we distinguish between it and $F_{123}$ since root vectors depend on the reduced expression, and they are in fact different. We skip the steps of applying braid moves and piecewise linear bijections to get back to the original reduced expression. 
\label{fig:Lf3}}
\end{figure}

For this reduced expression things work out nicely: at most two exponents change when one applies an $f_i$, and, as discussed in \cite{CT}, there is a straightforward relationship with the well known crystal structure on semi-standard Young tableaux. 
 There are reduced expressions with similar behavior in types $D_n, E_6,$ and $E_7$ (see \cite{SST}).

In general the relationship with standard combinatorial models is more complicated. For instance, for the reduced expression 
$w_0=s_1s_3s_2s_1s_3s_2$,
\begin{equation}
f_2(F_1^{(2)}F_3^{(3)}F_{123}^{(3)}F_{23}^{(2)}F_{12}^{(3)}F_2^{(2)})=
F_1^{(2)}F_3^{(3)}F_{123}^{(2)}F_{23}^{(3)}F_{12}^{(4)}F_2^{(2)}.
\end{equation}
Notice that $3$ exponents have changed.

\end{document}